
\documentclass[11pt, a4paper, twoside]{amsart}
\usepackage[centering, totalwidth = 380pt, totalheight = 590pt]{geometry}
\usepackage{amssymb, amsmath, amsthm, enumerate, microtype, stmaryrd, url}
\usepackage[latin1]{inputenc}
\usepackage[dvips, arrow, matrix, tips, curve]{xy}
\usepackage{color}
\definecolor{darkgreen}{rgb}{0,0.45,0}
\usepackage[pagebackref,colorlinks,citecolor=darkgreen,linkcolor=darkgreen]{hyperref}
\SelectTips{cm}{10}


 \DeclareMathOperator{\ob}{ob}
\DeclareMathOperator{\colim}{colim}
 
\newcommand{\cat}[1]{\mathbf{#1}}

\newcommand{\op}{\mathrm{op}}

\newcommand{\id}{\mathrm{id}}
\newcommand{\thg}{{\mathord{\text{--}}}}

\newcommand{\abs}[1]{{\left|{#1}\right|}}
\newcommand{\dbr}[1]{\left\llbracket{#1}\right\rrbracket}
\newcommand{\res}[2]{\left.{#1}\right|_{#2}}
\newcommand{\set}[2]{\left\{#1 \ \vrule\  #2\right\}}

\newcommand{\defeq}{\mathrel{\mathop:}=}

\newcommand{\cd}[2][]{\vcenter{\hbox{\xymatrix#1{#2}}}}


\newcommand{\B}{{\mathcal B}}
\newcommand{\C}{{\mathcal C}}

\newcommand{\E}{{\mathcal E}}
\newcommand{\F}{{\mathcal F}}

\renewcommand{\O}{{\mathcal O}}
\renewcommand{\P}{{\mathcal P}}

\newcommand{\X}{{\mathcal X}}


\newcommand{\xtor}[1]{\cdl[@1]{{} \ar[r]|-{\object@{|}}^{#1} & {}}}

\makeatletter

\def\hookleftarrowfill@{\arrowfill@\leftarrow\relbar{\relbar\joinrel\rhook}}
\def\twoheadleftarrowfill@{\arrowfill@\twoheadleftarrow\relbar\relbar}
\def\leftbararrowfill@{\arrowdoublefill@{\leftarrow\mkern-5mu}\relbar\mapstochar\relbar\relbar}
\def\Leftbararrowfill@{\arrowdoublefill@{\Leftarrow\mkern-2mu}\Relbar\Mapstochar\Relbar\Relbar}
\def\leftringarrowfill@{\arrowdoublefill@{\leftarrow\mkern-3mu}\relbar{\mkern-3mu\circ\mkern-2mu}\relbar\relbar}
\def\lefttriarrowfill@{\arrowfill@{\mathrel\triangleleft\mkern0.5mu\joinrel\relbar}\relbar\relbar}
\def\Lefttriarrowfill@{\arrowfill@{\mathrel\triangleleft\mkern1mu\joinrel\Relbar}\Relbar\Relbar}

\def\hookrightarrowfill@{\arrowfill@{\lhook\joinrel\relbar}\relbar\rightarrow}
\def\twoheadrightarrowfill@{\arrowfill@\relbar\relbar\twoheadrightarrow}
\def\rightbararrowfill@{\arrowdoublefill@{\relbar\mkern-0.5mu}\relbar\mapstochar\relbar\rightarrow}
\def\Rightbararrowfill@{\arrowdoublefill@{\Relbar\mkern-2mu}\Relbar\Mapstochar\Relbar\Rightarrow}
\def\rightringarrowfill@{\arrowdoublefill@\relbar\relbar{\mkern-2mu\circ\mkern-3mu}\relbar{\mkern-3mu\rightarrow}}
\def\righttriarrowfill@{\arrowfill@\relbar\relbar{\relbar\joinrel\mkern0.5mu\mathrel\triangleright}}
\def\Righttriarrowfill@{\arrowfill@\Relbar\Relbar{\Relbar\joinrel\mkern1mu\mathrel\triangleright}}

\def\leftrightarrowfill@{\arrowfill@\leftarrow\relbar\rightarrow}
\def\mapstofill@{\arrowfill@{\mapstochar\relbar}\relbar\rightarrow}

\renewcommand*\xleftarrow[2][]{\ext@arrow 20{20}0\leftarrowfill@{#1}{#2}}
\providecommand*\xLeftarrow[2][]{\ext@arrow 60{22}0{\Leftarrowfill@}{#1}{#2}}
\providecommand*\xhookleftarrow[2][]{\ext@arrow 10{20}0\hookleftarrowfill@{#1}{#2}}
\providecommand*\xtwoheadleftarrow[2][]{\ext@arrow 60{20}0\twoheadleftarrowfill@{#1}{#2}}
\providecommand*\xleftbararrow[2][]{\ext@arrow 10{22}0\leftbararrowfill@{#1}{#2}}
\providecommand*\xLeftbararrow[2][]{\ext@arrow 50{24}0\Leftbararrowfill@{#1}{#2}}
\providecommand*\xleftringarrow[2][]{\ext@arrow 10{26}0\leftringarrowfill@{#1}{#2}}
\providecommand*\xlefttriarrow[2][]{\ext@arrow 80{24}0\lefttriarrowfill@{#1}{#2}}
\providecommand*\xLefttriarrow[2][]{\ext@arrow 80{24}0\Lefttriarrowfill@{#1}{#2}}

\renewcommand*\xrightarrow[2][]{\ext@arrow 01{20}0\rightarrowfill@{#1}{#2}}
\providecommand*\xRightarrow[2][]{\ext@arrow 04{22}0{\Rightarrowfill@}{#1}{#2}}
\providecommand*\xhookrightarrow[2][]{\ext@arrow 00{20}0\hookrightarrowfill@{#1}{#2}}
\providecommand*\xtwoheadrightarrow[2][]{\ext@arrow 03{20}0\twoheadrightarrowfill@{#1}{#2}}
\providecommand*\xrightbararrow[2][]{\ext@arrow 01{22}0\rightbararrowfill@{#1}{#2}}
\providecommand*\xRightbararrow[2][]{\ext@arrow 04{24}0\Rightbararrowfill@{#1}{#2}}
\providecommand*\xrightringarrow[2][]{\ext@arrow 01{26}0\rightringarrowfill@{#1}{#2}}
\providecommand*\xrighttriarrow[2][]{\ext@arrow 07{24}0\righttriarrowfill@{#1}{#2}}
\providecommand*\xRighttriarrow[2][]{\ext@arrow 07{24}0\Righttriarrowfill@{#1}{#2}}

\providecommand*\xmapsto[2][]{\ext@arrow 01{20}0\mapstofill@{#1}{#2}}
\providecommand*\xleftrightarrow[2][]{\ext@arrow 10{22}0\leftrightarrowfill@{#1}{#2}}
\providecommand*\xLeftrightarrow[2][]{\ext@arrow 10{27}0{\Leftrightarrowfill@}{#1}{#2}}

\makeatother


\newcommand{\twocong}[2][0.5]{\ar@{}[#2] \save ?(#1)*{\cong}\restore}
\newcommand{\twoeq}[2][0.5]{\ar@{}[#2] \save ?(#1)*{=}\restore}
\newcommand{\rtwocell}[3][0.5]{\ar@{}[#2] \ar@{=>}?(#1)+/l 0.2cm/;?(#1)+/r 0.2cm/^{#3}}
\newcommand{\ltwocell}[3][0.5]{\ar@{}[#2] \ar@{=>}?(#1)+/r 0.2cm/;?(#1)+/l 0.2cm/^{#3}}
\newcommand{\ltwocello}[3][0.5]{\ar@{}[#2] \ar@{=>}?(#1)+/r 0.2cm/;?(#1)+/l 0.2cm/_{#3}}
\newcommand{\dtwocell}[3][0.5]{\ar@{}[#2] \ar@{=>}?(#1)+/u  0.2cm/;?(#1)+/d 0.2cm/^{#3}}
\newcommand{\dltwocell}[3][0.5]{\ar@{}[#2] \ar@{=>}?(#1)+/ur  0.2cm/;?(#1)+/dl 0.2cm/^{#3}}
\newcommand{\urtwocell}[3][0.5]{\ar@{}[#2] \ar@{=>}?(#1)+/dl  0.2cm/;?(#1)+/ur 0.2cm/^{#3}}
\newcommand{\drtwocell}[3][0.5]{\ar@{}[#2] \ar@{=>}?(#1)+/ul  0.2cm/;?(#1)+/dr 0.2cm/^{#3}}
\newcommand{\dthreecell}[3][0.5]{\ar@{}[#2] \ar@3{->}?(#1)+/u  0.2cm/;?(#1)+/d 0.2cm/^{#3}}
\newcommand{\utwocell}[3][0.5]{\ar@{}[#2] \ar@{=>}?(#1)+/d 0.2cm/;?(#1)+/u 0.2cm/_{#3}}
\newcommand{\dtwocelltarg}[3][0.5]{\ar@{}#2 \ar@{=>}?(#1)+/u  0.2cm/;?(#1)+/d 0.2cm/^{#3}}
\newcommand{\utwocelltarg}[3][0.5]{\ar@{}#2 \ar@{=>}?(#1)+/d  0.2cm/;?(#1)+/u 0.2cm/_{#3}}

\newcommand{\pushoutcorner}[1][dr]{\save*!/#1-1.2pc/#1:(-1,1)@^{|-}\restore}

\newdir{(}{{}*!<0em,-.14em>-\cir<.14em>{l^r}}
\newdir{ (}{{}*!/-5pt/\dir{(}}
\newdir{ >}{{}*!/-5pt/\dir{>}}


\swapnumbers
\theoremstyle{plain}
\newtheorem{Thm}{Theorem}[section]
\newtheorem{Prop}[Thm]{Proposition}

\newtheorem*{Lemma}{Lemma}

\theoremstyle{definition}
\newtheorem{Defn}[Thm]{Definition}

\theoremstyle{remark}
\newtheorem{Ex}[Thm]{Example}
\newtheorem{Exs}[Thm]{Examples}
\newtheorem{Rk}[Thm]{Remark}

\DeclareMathOperator{\Lan}{Lan}

\begin{document}
 \leftmargini=2em
\title{Ionads}
\author{Richard Garner}
\address{Department of Computing, Macquarie University, Sydney NSW 2109, Australia}
\email{richard.garner@mq.edu.au} \subjclass[2010]{Primary: 18B25, 54A05}
\date{\today}
\begin{abstract}
The notion of Grothendieck topos may be considered as a generalisation of that
of topological space, one in which the points of the space may have non-trivial
automorphisms. However, the analogy is not precise, since in a topological
space, it is the points which have conceptual priority over the open sets,
whereas in a topos it is the other way around. Hence a topos is more correctly
regarded as a generalised locale, than as a generalised space. In this article
we introduce the notion of \emph{ionad}, which stands in the same relationship
to a topological space as a (Grothendieck) topos does to a locale. We develop
basic aspects of their theory and discuss their relationship with toposes.
\end{abstract}
\maketitle

\section{Introduction} Grothendieck introduced \emph{toposes}
in \cite{SGA4.1} in order to describe a more general kind of ``space'' than
that given by general topology, one whose ``points'' could possess non-trivial
automorphisms. However, as Grothendieck himself immediately points out, the
notion of topos is not a faithful generalisation of that of topological space;
for though each space gives rise to a topos---namely the topos of sheaves on
that space---we may only reconstruct the space from the topos if the space
satisfies a separability axiom (``sobriety''). This is a reflection of a more
general fact concerning the continuous maps between spaces. Every such map
induces a geometric morphism between the corresponding sheaf toposes, and so we
have a functor $\cat{Sp} \to \cat{GTop}$ from the category of spaces to the
category of Grothendieck toposes; but this functor is neither full nor faithful
(not even in the bicategorical sense). As is well known, the reason for these
discrepancies is that a Grothendieck topos is not really a more general kind of
\emph{space}, but rather a more general kind of
\emph{locale}~\cite{Ehresmann1957Gattungen,Johnstone1982Stone}. Indeed, to
every locale we may assign a topos---again, by the sheaf construction---but, by
contrast with the case for spaces, it is always possible to reconstruct the
locale from the topos. We obtain similar good behaviour with respect to
continuous maps of locales: the sheaf construction extends to a functor
$\cat{Loc} \to \cat{GTop}$ which is both (bicategorically) full and faithful,
and has a (bicategorical) left adjoint.

The reason that we bring this up is to point out a gap in our conceptual
framework: there is no established structure which generalises the notion of
topological space in a manner corresponding to that in which a Grothendieck
topos generalises a locale. The purpose of this paper is to fill this gap by
introducing the notion of \emph{ionad}\footnote{``ionad'' is a Gaelic word
meaning ``place''}. Like a topological space, an ionad comprises a set of
points together with a ``topology''. The notion of topology employed is not the
classical one, but is at least a generalisation of it: which in particular
means that there is a canonical way of viewing a topological space as an ionad,
giving rise to a full, reflective embedding of the category of spaces into the
category of ionads. Thus an ionad is indeed a ``generalised space'', and the
tightness of the correspondence is confirmed by many further correlations
between the theory of topological spaces and that of ionads: for instance, a
continuous map between ionads is a function on the underlying sets which
commutes with the topologies in an appropriate sense; (co)limits in the
category of ionads are constructed by equipping the (co)limit of the underlying
diagram of sets with a suitably universal topology; and the most common way of
constructing an ionad is in terms of a set of points together with a
(suitably-generalised) basis of opens. Moreover, any ionad has a collection of
``opens'', and just as the opens of a topological space form a locale, so the
opens of an ionad form a topos. For those ionads arising from topological
spaces, this ``topos of opens'' is just the topos of sheaves on the space;
which is to say that the passage from spaces to ionads to toposes coincides
with that from spaces to locales to toposes.
%
%

If we do take a topos-theoretic perspective, then the notion of ionad turns out
to have a succinct and familiar expression---it is nothing other than a topos
equipped with a separating set of points. Many other aspects of ionad theory
admit similarly familiar topos-theoretic interpretations. However, the view we
take here is that it should be perfectly possible to develop the basic theory
of ionads without reducing it to the theory of toposes, and we have arranged
our account accordingly.
%
%
%
%
%
%
%

The paper is structured as follows. In Section~\ref{s1}, we introduce the
notion of ionad, and give such examples as we may construct with our bare
hands. We see that the ``generalised opens'' of an ionad are always a topos,
and that, as mentioned above, the notion of ionad is essentially the same as
that of spatial topos. Then in Section~\ref{s2} we describe how ionads may be
generated from a \emph{basis}; where this notion naturally generalises the
corresponding one for spaces. Using this we are able to give several more
examples of ionads, which correspond to some familiar examples of toposes. We
investigate the connection between the ionad generated from a basis, and the
Grothendieck topos generated from a site, and finally characterise the ionads
which may be generated from a basis (which we call \emph{bounded}) as being
those whose category of opens is a Grothendieck topos.

In Section~\ref{s3} we define \emph{continuous maps} between ionads and give a
number of examples. We shall see that both the category of topological spaces
and the category of small categories embed fully and faithfully into the
category of bounded ionads, with the former embedding having a left adjoint;
later, we show that the latter one has a right adjoint. In Sections~\ref{s5}
and~\ref{lims}, we briefly consider further aspects of the theory.
Section~\ref{s5} shows that the category of ionads may be enriched to a
$2$-category, whose $2$-cells are \emph{specialisations} between continuous
maps, generalising the (pointwise) specialisation order on maps of topological
spaces; and Section~\ref{lims} describes the limits and colimits possessed by
the $2$-category of ionads: we see that the $2$-category of \emph{all} ionads
has rather few limits and colimits, but that the $2$-category of bounded ionads
is complete and cocomplete. The paper is concluded in Section~\ref{s6} by a
short discussion on the comparative advantages and disadvantages of the notions
of topos and ionad.
%

\textbf{Acknowledgements}. The material of this paper was first presented at
the 2009 Category Theory conference in Cape Town. I have benefitted from
discussions with Martin Hyland, Steve Lack, Pedro Resende, Emily Riehl, Michael
Shulman, Thomas Streicher and Dom Verity; whilst from an anonymous referee I
received detailed and incisive feedback which has improved the paper
considerably. I was supported in this work by a Research Fellowship of St
John's College, Cambridge and an Australian Research Fellowship.

\section{Ionads}\label{s1}
We usually define a topological space to be a set $X$ of points together with a
\emph{topology}: a collection of subsets of $X$ closed under finite
intersections and arbitrary unions. However, we may equally well give the set
$X$ together with an \emph{interior operator}: an order-preserving map $i
\colon \P X \to \P X$ which is a coclosure operator (i.e., deflationary and
idempotent) and preserves finite intersections. The passage between the two
definitions is straightforward: given a topology on $X$, there is an interior
operator sending $A \subseteq X$ to the largest open set contained in it; and
given an interior operator $i \colon \P X \to \P X$, there is a topology on $X$
consisting of all those $A \subseteq X$ for which $A = i(A)$. Now by taking
this second definition of topological space and replacing every poset-theoretic
device which appears in it with a corresponding
category-theoretic one, we obtain
the notion of ionad.

\begin{Defn}\label{genspace}
An \emph{ionad} is given by a set $X$ of points together with a cartesian
(i.e., finite limit preserving) comonad $I_X \colon \cat{Set}^X \to
\cat{Set}^X$.
\end{Defn}

\begin{Rk}\label{not1}
It will be convenient to carry over from the topological case the abuse of
notation which names a space by its set of points: thus we may refer to an
ionad $(X, I_X)$ simply as $X$, with the interior comonad being left implicit.
\end{Rk}

If we are given a topological space presented in terms of its interior
operator~$i$, then we can reconstruct its open sets as the collection of
$i$-fixpoints. In the case of an ionad, the only presentation we have is in
terms of a generalised interior operator: but we can use the category-theoretic
analogue
of the
fixpoint construction in order to \emph{define} its ``generalised opens''.

\begin{Defn}\label{catopens}
The \emph{category of opens} $\cat O(X)$ of an ionad $X$ is the category of
$I_X$-coalgebras.
\end{Defn}

\begin{Rk}
In the definition of ionad, we have chosen to have a mere \emph{set} of points,
rather than a category of them. We do so for a number of reasons. The first is
that this choice mirrors most closely the definition of topological space,
where we have a set, and not a poset, of points. The second is that we would in
fact obtain no extra generality by allowing a category of points. We may see
this by analogy with the topological case, where to give an interior operator
on a poset of points $(X, \leqslant)$ is equally well to give a topology
$\O(X)$ on $X$ such that every open set is upwards-closed with respect to
$\leqslant$. Similarly, to equip a small category $\cat C$ with an interior
comonad is equally well to give an interior comonad on $X
\defeq \ob \cat C$ together with a factorisation of the
forgetful functor $\cat O(X) \to \cat{Set}^{X}$ through the presheaf category
$\cat{Set}^{\cat C}$; this is an easy consequence of Example~\ref{alexandroff}
below. However, the most compelling reason for not admitting a category of
points is that, if we were to do so, then adjunctions such as that between the
category of ionads and the category of topological spaces would no longer
exist. Note that, although we do not allow a category of points, the points of
any (well-behaved) ionad bear nonetheless a canonical category
structure---described in Definition~\ref{specfunctor} and Remark~\ref{vx}
below---which may be understood as a generalisation of the specialisation
ordering on the points of a space.
\end{Rk}

\begin{Rk}\label{discrete}
As stated in the Introduction, the category of opens of an ionad is always a
(cocomplete, elementary) topos: this because $\cat{Set}^X$ is a topos for any
set $X$, and the category of coalgebras for a cartesian comonad on a topos is
again a topos. Moreover, the cofree/forgetful adjunction between $\cat{Set}^X$
and $\cat{O}(X)$ yields a surjective geometric morphism $\cat{Set}^X \to
\cat{O}(X)$. To give such a geometric morphism is to give an $X$-indexed family
of points of the topos $\cat{O}(X)$; to say that it is surjective is to say
that these points separate the generalised opens in $\cat{O}(X)$, in the sense
that their inverse image functors jointly reflect isomorphisms. In particular,
this makes $\cat{O}(X)$ a topos with \emph{enough points}~\cite[\S
C2.2]{Johnstone2002Sketches2}.

In fact, given any surjective geometric morphism $f \colon \cat{Set}^X \to \E$,
we obtain an ionad $(X, f^\ast f_\ast)$ whose category of open sets is
equivalent (by surjectivity of $f$) to $\mathcal E$. Thus ionads are
essentially the same things as toposes equipped with a separating set of
points. There is a parallel here with the theory of topological spaces: where a
space may be identified with a surjective locale morphism out of a discrete
locale---one of the form $\P X$ for some set $X$, an ionad may be identified
with a surjective geometric morphism out of a discrete topos---one of the form
$\cat{Set}^X$ for some set $X$. \end{Rk}

\begin{Rk}
We shall see in Examples~\ref{ordgen}.2 below that every topological space $A$
gives rise to an ionad $\Sigma A$, and that the topos of opens $\cat O(\Sigma
A)$ is equivalent to the topos of sheaves $\cat{Sh}(A)$. With this in mind, we
could have chosen to refer to the topos $\cat{O}(X)$ as the \emph{topos of
sheaves} on the ``generalised space'' $X$. We will not do so here, for the
following two reasons. The first is that, in generalising further concepts from
topological spaces to ionads, we often need do nothing more than replace
$\O(X)$ everywhere by $\cat O(X)$, and the inevitability of this replacement
would be diminished if we were to refer to this latter topos as $\cat{Sh}(X)$.
The second reason is that, whilst it is indeed true that objects of $\O(X)$
look very much like sheaves on a topological space---as evidenced by
Proposition~\ref{sheaves}, for instance---it is none the less true that they
look very much like generalised open sets. We will expand on this point in
Remark~\ref{rk:genopen} below.
\end{Rk}

\begin{Ex}\label{alexandroff}
If $(X, \leqslant)$ is a partially ordered set, then there is a topology on
$X$---the \emph{Alexandroff topology}---whose open sets are the upwards-closed
subsets of $X$ with respect to $\leqslant$. In a similar way, if ${\cat C}$ is
a small category, then there is an ionad $A(\cat C)$ on the set of objects of
$\cat C$ whose generalised opens are the ``generalised upsets'' in $\cat C$:
that is, the covariant presheaves on $\cat{C}$. The interior comonad of this
ionad is induced by the adjunction
\begin{equation}\label{genalexandroff}
    \cd[@C+1em]{\cat{Set}^{\ob \cat C} \ar@<-4pt>[r]_-{\mathrm{Ran}_J} \ar@{}[r]|{\bot} & {\cat{Set}^{\cat C}} \ar@<-4pt>[l]_-{\cat{Set}^J}}
\end{equation}
obtained by restriction and right Kan extension along the inclusion $J \colon
\ob \cat C \rightarrowtail   {\cat C}$. Observe that the functor $\cat{Set}^J$,
since it strictly creates equalisers, is strictly comonadic; and so the
category of open sets for $A(\cat C)$ is isomorphic to $\cat{Set}^{\cat C}$.

Recall that the lattice of open sets of an Alexandroff topology is closed under
\emph{arbitrary} intersections, and that this property serves to completely
characterise the Alexandroff topologies. Likewise, for an Alexandroff ionad,
the forgetful functor $U \colon \cat O(A(\cat C)) \to \cat{Set}^{\ob \cat C}$
creates limits (since $\cat{Set}^J$, and hence the interior comonad, preserve
them); and this property completely characterises the Alexandroff ionads.
Indeed, if for some ionad $X$ the forgetful functor $\cat O(X) \to \cat{Set}^X$
creates limits, then the interior operator $I_X \colon \cat{Set}^{X} \to
\cat{Set}^{X}$ will preserve them; and so have a colimit-preserving left
adjoint $K$. The comonad structure of $I_X$ transposes across the adjunction to
give a monad structure on $K$, which is equivalently a monoid structure on the
functor
\begin{equation*}
    M \defeq X \xrightarrow{y} \cat{Set}^{X} \xrightarrow{K}  \cat{Set}^{X}
\end{equation*}
with respect to profunctor composition; and this in turn amounts to specifying
a category ${\cat C}$ with object set $X$ and homsets ${\cat C}(x, y)
\defeq M(x)(y)$. Moreover, it follows (``adjoint triples'' \cite{Eilenberg1965Adjoint}) that there is an
isomorphism between the category of $I_X$-coalgebras%
---which is $\cat O(X)$---and the category of $K$-algebras, which, by an easy calculation, is $\cat{Set}^{\cat C}$.
\end{Ex}

\section{Generalised bases}\label{s2}
In order to produce more sophisticated examples of ionads, we will need a way
of generating topologies from bases. Recall that a \emph{basis} for an ordinary
topology on a set $X$ is a collection $\B \subseteq \P X$ satisfying the
following properties:
\begin{itemize}
\item For every $x \in X$, there is some $B \in \B$ with $x \in \B$;
\item If $x \in X$ and $B_1, B_2 \in \B$ with $x \in B_1 \cap B_2$, then
    there is some $B_3 \in \B$ with $B_3 \subseteq B_1 \cap B_2$ and $x \in
    B_3$.
\end{itemize}
The open sets of the topology this generates are arbitrary unions of elements
of $\B$. However, since our aim is to generalise this definition from spaces to
ionads, we will be more interested in describing the interior operator
generated by $\B$. To this end, we regard $\B$ as a poset under inclusion, and
write $m \colon \B \to \P X$ for the (order-preserving) inclusion map. Now the
basis axioms for $\B$ correspond to the requirement that $m$ should be
\emph{flat}, in the sense that, for each $x \in X$, the set $\set{B \in \B}{x
\in m(B)}$ should be a downwards directed poset. In fact, we can drop the
requirement that $m$ should be injective entirely; this gives a more
``intensional'' notion of basis, where the same open set may be named by more
than one basis element. (Observe that this happens quite frequently in
practice: think, for example, of the Zariski topology on the prime spectrum of
a ring; or of the logical topology on the set of complete theories extending a
first-order theory $\mathbb T$.)

Now given any flat morphism $m \colon \B \to \P X$, we may define an interior
operator on $X$ in the following manner. We write $\mathord \downarrow \B$ for
the poset of downsets in $\B$, and $y \colon \B \to \mathord \downarrow \B$ for
the order-preserving map sending $B \in \B$ to the downset of all elements
below $B$. This map exhibits $\mathord \downarrow \B$ as the free
join-completion of $\B$, and so there's a unique way of extending $m$ along $y$
to yield a join-preserving map $m \otimes (\thg) \colon \mathord \downarrow \B
\to \P X$:
\begin{equation*}
    \cd{
        \B \ar[d]_y \ar[r]^-m & \P X \\ \mathord \downarrow \B \ar@{-->}[ur]_{m \otimes (\thg)}
    } \ \text.
\end{equation*}
Since this $m \otimes (\thg)$ preserves joins, it has a right adjoint $[m,
\thg] \colon \P X \to \mathord \downarrow \B$, and composing these yields a
coclosure operator $i \defeq m \otimes [m, \thg]$ on $\P X$. In order to show
that this $i$ preserves finite meets, it suffices to show that $m \otimes \thg$
does so; but a standard piece of lattice theory tells us that this is
equivalent to the flatness of $m$. It remains to check that this interior
operator $i$ is the one associated to the topology generated by $\B$. For this,
we calculate that
\begin{equation*}
    m \otimes \X = \bigcup_{B \in \X} m(B) \qquad \text{and} \qquad [m, A] = \set{B \in \B}{m(B) \subseteq A}
\end{equation*}
so that the composite $i \colon \P X \to \P X$ sends $A$ to the union of all
those $m(B)$'s with $m(B) \subseteq A$, as required. Consideration of the above
now leads us to propose:

\begin{Defn}
A \emph{basis} for an ionad with set of points $X$ is given by a small category
$\cat B$ together with a functor $M \colon \cat B \to \cat{Set}^X$ which is
\emph{flat}, in the sense that for each $x \in X$, the category of elements of
the functor $M(\thg)(x) \colon \cat B \to \cat{Set}$ is cofiltered.
\end{Defn}
The construction of an ionad from a basis mirrors that of a space from a basis.
The Yoneda embedding $y \colon \cat B \to \cat{Set}^{\cat B^\op}$ exhibits
$\cat{Set}^{\cat B^\op}$ as the free colimit-completion of $\cat B$, and so we
may extend $M$ along it to yield a colimit-preserving functor $M \otimes (\thg)
\colon \cat{Set}^{\cat B^\op} \to \cat{Set}^X$:
\begin{equation*}
    \cd{
        \cat B \ar[d]_y \ar[r]^-M \twocong[0.35]{dr} & \cat{Set}^X\ \text. \\ \cat{Set}^{\cat B^\op} \ar@{-->}[ur]_{M \otimes (\thg)} & {}
    }
\end{equation*}
Since this $M \otimes (\thg)$ preserves colimits, it has a right adjoint $[M,
\thg]$, and composing these together yields a comonad on $\cat{Set}^X$. Again,
to ensure that this comonad preserves finite limits, it suffices to show that
$M \otimes (\thg)$ does; and a standard piece of category theory says that this
is equivalent to the flatness of $M$.

\begin{Rk}\label{rk:genopen}
If $\B$ is a basis for an ordinary topology on a set $X$, then a subset $A
\subset X$ is open in that topology just when every $x \in A$ is contained in
some $B \in \B$ with $B \subset A$. If $M \colon \cat B \to \cat{Set}^X$ is a
basis for an ionad $X$, then we may view objects of the category of opens $\cat
O(X)$ in a corresponding manner. Unravelling the definitions, we see that the
interior comonad $I$ generated by the basis $M$ has its value at $A \in
\cat{Set}^X$ given by
\begin{equation*}
    (I A)(x) = \int^{B \in \cat B} (MB)(x) \times \prod_{y \in X}A(y)^{(MB)(y)}\rlap{ .}
\end{equation*}
Thus, if we think of a typical $A \in \cat{Set}^X$ as specifying, for each $x
\in X$, a set $Ax$ of proofs that $x$ lies in $A$, then to give an
$I$-coalgebra structure on $A$ is to give a mapping which, to each proof that
$x$ lies in $A$, coherently assigns an element $B \in \cat B$, together with
proofs that $x$ lies in $MB$ and that $MB$ is contained in $A$.
\end{Rk}

\begin{Rk}\label{factor}
Having motivated the preceding construction purely from topological
considerations, we now see that it is a familiar one in topos-theory. A basis
for an ionad is a flat functor $\cat B \to \cat{Set}^X$, which corresponds to a
colimit-\ and finite-limit-preserving functor $\cat {Set}^{\cat B^\op} \to
\cat{Set}^X$, and hence to a geometric morphism $\cat{Set}^X \to \cat
{Set}^{\cat B^\op}$. Any such geometric morphism factors as
\begin{equation}\label{eq:toposfact}\cat{Set}^X \xrightarrow{\ \ p\ \ } \E
\xrightarrow{\ \ i\ \ } \cat {Set}^{\cat B^\op}\end{equation} where $p$ is a
geometric surjection, and $i$ a geometric inclusion (see~\cite[Theorem
A4.2.10]{Johnstone2002Sketches} for example). By Remark~\ref{discrete}, the map
$p$ determines an ionad on $X$, which is by inspection precisely the ionad
generated by the basis $\cat B \to \cat{Set}^X$. The fact that $i$ is a
geometric inclusion tells us that $\E$, the category of opens of this ionad, is
a subtopos of $\cat {Set}^{\cat B^\op}$. In fact, we have:
\end{Rk}
\begin{Prop}\label{sheaves}
If $M \colon \cat B \to \cat{Set}^X$ is a basis for an ionad $X$, then the
category $\cat O(X)$ is equivalent to the category of sheaves on the site whose
underlying category is $\cat B$ and whose covering families are those $(f_i
\colon U_i \to U \mid i \in I)$ in $\cat B$ which $M$ sends to jointly
epimorphic families in $\cat{Set}^X$.
\end{Prop}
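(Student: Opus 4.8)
The plan is to piggyback on the topos-theoretic factorisation already set up in Remark~\ref{factor}, and then to identify precisely the Grothendieck topology cut out by the geometric inclusion appearing there. First I would recall that the flat functor $M$ corresponds to a geometric morphism $f \colon \cat{Set}^X \to \cat{Set}^{\cat B^\op}$ with inverse image $f^\ast = M \otimes (\thg)$, and that by Remark~\ref{factor} this factors as a surjection $p \colon \cat{Set}^X \to \E$ followed by an inclusion $i \colon \E \rightarrowtail \cat{Set}^{\cat B^\op}$, where $\E \simeq \cat O(X)$. Passing to inverse images gives $M \otimes (\thg) = p^\ast \circ i^\ast$, and by Remark~\ref{discrete} the functor $p^\ast$ is, up to the comparison equivalence $\E \simeq \cat O(X)$, the forgetful functor $\cat O(X) \to \cat{Set}^X$, which is comonadic and hence conservative.

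Next I would invoke the standard correspondence between subtoposes of a presheaf topos and Grothendieck topologies on the indexing category (see e.g.\ \cite[\S C2.2]{Johnstone2002Sketches2}). Since $i$ exhibits $\E$ as a subtopos of $\cat{Set}^{\cat B^\op}$, there is a unique topology $J$ on $\cat B$ with $\E \simeq \cat{Sh}(\cat B, J)$, and a sieve $S \rightarrowtail yU$ is $J$-covering exactly when $i^\ast$ sends $S \rightarrowtail yU$ to an isomorphism. Because $p^\ast$ is conservative, this holds if and only if $p^\ast i^\ast(S \rightarrowtail yU) = M \otimes (S \rightarrowtail yU)$ is an isomorphism in $\cat{Set}^X$. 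Thus the whole problem is reduced to analysing when $M \otimes (\thg)$ inverts the inclusion of a sieve.

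This is the computational heart. I would examine the map $M \otimes S \to M \otimes yU = MU$. Since $M \otimes (\thg)$ preserves finite limits it preserves monomorphisms, so this map is a mono in $\cat{Set}^X$; and a mono in $\cat{Set}^X$ is an isomorphism precisely when it is epic. Writing $S$ as the colimit of the representables $yV$ it contains and using cocontinuity of $M \otimes (\thg)$, the map $M \otimes S \to MU$ is epic if and only if the family $(MV \to MU)_{(V \to U) \in S}$ is jointly epimorphic; and since every arrow of $S$ factors through one of its generators $f_i$, this holds if and only if the generating family $(M(f_i) \colon MU_i \to MU)$ is jointly epimorphic. Chaining the equivalences identifies $J$ with the topology whose covering families are exactly those that $M$ sends to jointly epimorphic families, as claimed. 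Note that a family and the sieve it generates have the same $M$-image, so the family-level description is well-posed and generates the genuine topology $J$; no separate verification of the coverage axioms is needed, since $J$ already arises from a subtopos.

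The main obstacle is really careful bookkeeping at the interface of three languages---coalgebras, geometric morphisms, and sites---rather than any single hard computation. The two points that must be handled with care are that $p^\ast$ is conservative (so that invertibility in $\E$ can be tested after applying $M \otimes (\thg)$), and that $M \otimes (\thg)$ simultaneously respects the colimit presentation of a sieve and preserves the monomorphisms needed to promote ``jointly epic'' to ``isomorphism''. Everything else is routine unwinding of the definitions.
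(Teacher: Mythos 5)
Your proof is correct and follows essentially the same route as the paper's: both exhibit $\cat O(X)$ as a left-exact localization (subtopos) of $\cat{Set}^{\cat B^\op}$, both use conservativity of the comonadic forgetful functor (your $p^\ast$) to reduce invertibility of a sieve inclusion to invertibility of its image under $M \otimes (\thg)$ in $\cat{Set}^X$, and both then identify this with the family being sent to a jointly epimorphic one. The only cosmetic differences are that the paper re-derives the subtopos structure via the comonadic comparison functor $L$ and its fully faithful right adjoint rather than citing the surjection--inclusion factorisation of Remark~\ref{factor}, and it closes the computation using preservation of regular epi--mono factorisations where you use preservation of monomorphisms together with balancedness of $\cat{Set}^X$ and a colimit decomposition of the sieve.
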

\begin{proof}
Since $\cat O(X)$ is the category of coalgebras for the comonad generated by
the adjunction
\begin{equation*}
    \cd{\cat{Set}^{X} \ar@<-4pt>[r]_{[M, \thg]} \ar@{}[r]|{\bot} & {\cat{Set}^{\cat B^\op}} \ar@<-4pt>[l]_{M \otimes (\thg)}}\ \text,
\end{equation*}
there is a canonical comparison functor $L \colon \cat{Set}^{\cat B^\op} \to
\cat O(X)$, which preserves finite limits since $M \otimes (\thg)$ does.
Because $\cat{Set}^{\cat{B}^\op}$ has equalisers, this functor has a right
adjoint $R$; because $M \otimes (\thg)$ preserves them, this $R$ is fully
faithful, and so exhibits $\cat O(X)$, up-to-equivalence, as the category of
sheaves for a site structure on $\cat B$. A family of morphisms $(f_i \colon
U_i \to U \mid i \in I)$ is covering for this site just when the subobject
$\phi \colon A \rightarrowtail \cat B(\thg,U)$ defined by $A(V) = \{\,p \colon
V \to U \mid p\text{ factors through some $f_i$}\,\}$ is sent to an isomorphism
by $L$ (cf.~\cite[Example A4.3.5]{Johnstone2002Sketches}). Since the forgetful
functor $U_X \colon \cat O(X) \to \cat{Set}^X$ is comonadic, hence
conservative, to ask that $L\phi$ is invertible in $\cat O(X)$ is equally well
to ask that $U_XL\phi = M \otimes \phi$ be so in $\cat{Set}^X$. But we may
obtain $\phi$ as the second half of the regular epi-mono factorisation of the
map $[\C(\thg, f_i)]_{i \in I} \colon \sum_{i \in I} \C(\thg,U_i) \to \C(\thg,
U)$; and since $M\otimes(\thg)$ preserves finite limits and small colimits, it
preserves such factorisations, whence $M \otimes \phi$ is an isomorphism if and
only if $(M \otimes \C(\thg, f_i)  \mid i \in I)$ is a jointly epimorphic
family in $\cat{Set}^X$; which is equally well if and only if $(Mf_i \mid i \in
I)$ is a jointly epimorphic family, as claimed.
\end{proof}

\begin{Exs}\label{ordgen}\hfill
\begin{enumerate}
\item If $\cat C$ is a small category, then we obtain a basis for the
    Alexandroff ionad $A(\cat C)$ of Example~\ref{alexandroff} by taking
    $\cat B
    \defeq \cat C^\op$ and $M \defeq [J, \thg] \colon \cat
    C^\op \to \cat{Set}^{\ob \cat C}$, where $J \colon \ob \cat C \to
    \cat{C}^\op$ is the canonical inclusion. We see that the ionad this
    basis generates is $A(\cat C)$ by noting that the adjunction $M \otimes
    (\thg) \dashv [M, \thg]$ induced by $M$ is precisely the adjunction
    of~\eqref{genalexandroff}. \vskip0.5\baselineskip
\item Every topological space $X$ gives rise to an ionad $\Sigma X$ on the
    same set of points, generated by the following basis. We take $\cat B
    \defeq \O(X)$, the lattice of open sets of the topology
    (though we could equally well take it to be any basis, in the classical
    sense, for the topology on $X$), and $M \colon \O(X) \to \cat{Set}^X$
    the composite of $\O(X) \rightarrowtail \P X$ with the obvious
    inclusion $\P X \rightarrowtail \cat{Set}^X$. Thus we have
\begin{equation*}
    M(U)(x) = \begin{cases}1 & \text{if $x \in U$;} \\0 & \text{otherwise.}\end{cases}
\end{equation*}
    Clearly $M$ preserves finite limits, and so is flat; and it's easy to
    see that the covering families of the induced site structure on $\cat
    B$ are of the form $(U_i \subseteq U \mid i \in I)$ where $\bigcup U_i
    = U$. Thus by Proposition~\ref{sheaves}, the category of opens $\cat
    O(\Sigma X)$ is equivalent to the category of sheaves on the space $X$.
    \vskip0.5\baselineskip
\vskip0.5\baselineskip
\item Let $X$ be a topological space equipped with an action by a discrete
    group~$G$. We define the \emph{$G$-equivariant ionad} $\Sigma_G X$ to
    have set of points $X$, and topology generated by the following basis.
    We take $\cat B \defeq \O_G(X)$, the category whose objects are open
    sets of $X$, and whose morphisms $U \to V$ are elements $g \in G$ for
    which $g(U) \subseteq V$; and define $M \colon \O_G(X) \to \cat{Set}^X$
    by
    \begin{equation*}
        M(U)(x) = \set{h \in G}{hx \in U} \qquad \text{and} \qquad M(g)(x) \colon h \mapsto gh\ \text.
    \end{equation*}
    It's easy to show that $M$ is flat, and so defines a basis for an ionad
    $\Sigma_G X$. The induced site structure on $\O_G(X)$ has as covering
    families all those $(g_i \colon U_i \to V \mid i \in I)$ such that
    $(Mg_i)$ is jointly epimorphic; that is, such that for every $x \in X$
    and $h \in G$ with $hx \in V$, there exists $i \in I$ such that
    $g_i^{-1}hx \in U_i$; that is, such that the family of maps
    $\res{g_i}{U_i} \colon U_i \to X$ jointly cover $V$. Thus by
    Proposition~\ref{sheaves} and~\cite[Examples
    A.2.1.11(c)]{Johnstone2002Sketches}, we conclude that $\cat O(\Sigma_G
    X)$ is equivalent to the topos of $G$-equivariant sheaves on $X$.
    \vskip0.5\baselineskip
\item If $A$ is a commutative ring, we define its \emph{\'etale ionad} as
    follows. Its set of points is $\mathrm{Spec}(A)$, the set of prime
    ideals of $A$, whilst its topology is generated by the following basis.
    The category $\cat B$ is (a skeleton of) $\cat{Et}_A^\op$, the opposite
    of the category of \'etale $A$-algebras, whilst $M \colon
    \cat{Et}_A^\op \to \cat{Set}^{\mathrm{Spec}(A)}$ sends an \'etale
    $A$-algebra $f \colon A \to B$ and a prime ideal $P \lhd A$ to the set
    of all prime ideals $Q \lhd B$ for which $f^{-1}(Q) = P$. The induced
    site structure on $\cat{Et}_A^\op$ has as covering families those $(f_i
    \colon B_i \leftarrow B \mid i \in I)$ such that $(Mf_i \mid i \in I)$
    is jointly epimorphic; but since the disjoint union of the sets
    $(MB)(P)$, as $P$ ranges over the prime ideals of $A$, is clearly the
    set of all prime ideals of $B$, to say that the family $(Mf_i \mid i
    \in I)$ is jointly epimorphic is equally well to say that every prime
    ideal of $B$ is the inverse image of a prime ideal of some $B_i$.
    Thus by Proposition~\ref{sheaves} and~\cite[Exercise
    0.11]{Johnstone1977Topos}, the topos of opens of the \'etale ionad is
    the little \'etale topos of $A$.\vskip0.5\baselineskip
\item Let $\mathbb T$ be a coherent first-order theory over a language of
    cardinality $\kappa$. For any regular cardinal $\lambda > \kappa$, we
    define the \emph{$\lambda$-small classifying ionad} as follows.
 Let $X$ be a set of representatives of isomorphism classes of models of
    $\mathbb T$ of cardinality $< \lambda$, and let $\cat B$ be the
    \emph{syntactic category}~\cite[\S D1.4]{Johnstone2002Sketches2} of the
    theory $\mathbb T$; it has as objects, coherent formulae-in-context
    $\{\vec x. \phi\}$, and as morphisms, equivalence classes of provably
    functional relations between them. We define a functor $M \colon \cat B
    \to \cat{Set}^X$ which takes a formula-in-context $\{\vec x. \phi\}$
    and a model $A$ and returns the interpretation $\dbr{\phi}_A$ of $\phi$
    in~$A$. We may show that $\cat B$ has, and $M$ preserves, all finite
    limits, so that $M$ is a basis for an ionad. Moreover, if
    $\big(\theta_i \colon \{\vec y_i. \phi_i\} \to \{\vec x. \psi\} \mid 1
    \leqslant i \leqslant n)$ is a family of maps in $\cat B$, then it is
    easy to see that the sequent $\psi \vdash_{\vec x} \bigvee_{i=1}^n
    (\exists \vec y_i)\theta_i$ is validated in a given model $A$ of
    $\mathbb T$ if and only if the family of functions $\dbr{\theta_i}_A
    \colon \dbr{\phi_i}_A \to \dbr{\psi}_A$ is jointly epimorphic. Since
    the collection of models of cardinality $< \lambda$ is complete for
    $\mathbb T$, it follows that the family $(\theta_i \mid i \in I)$ is
    sent to a jointly epimorphic family in $\cat{Set}^X$ if and only if the
    sequent $\psi \vdash_{\vec x} \bigvee_{i=1}^n (\exists \vec
    y_i)\theta_i$ is provable in $\mathbb T$: so that the induced site of
    this ionad is the syntactic site of $\mathbb T$, and the topos of
    opens, the classifying topos of $\mathbb T$.
\end{enumerate}
\end{Exs}
Although every ionad we meet in practice will be generated from a basis, it is
not \emph{a priori} the case that every ionad need arise in this way. Indeed,
if an ionad is generated by a basis $M \colon \cat B \to \cat{Set}^X$, then its
category of opens is equivalent to a category of sheaves, and hence a
Grothendieck topos. On the other hand, the category of coalgebras for an
\emph{inaccessible} cartesian comonad on $\cat{Set}^X$ is not locally
presentable, and hence not a Grothendieck topos. In fact, we have:

\begin{Prop}\label{boundprop}
The following conditions on an ionad $X$ are equivalent:
\begin{enumerate}
\item It may be generated (up to isomorphism) from a basis;
\item Its interior comonad is accessible;
\item Its category of opens is a Grothendieck topos.
\end{enumerate}
\end{Prop}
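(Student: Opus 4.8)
The plan is to establish the cycle of implications $(1) \Rightarrow (2) \Rightarrow (3) \Rightarrow (1)$. Note that $(1) \Rightarrow (3)$ is in any case immediate from Proposition~\ref{sheaves}, since a basis exhibits $\cat O(X)$ as a category of sheaves on a small site; but routing the argument through condition~(2) isolates the role of accessibility more clearly. For $(1) \Rightarrow (2)$, suppose $X$ is generated by a basis $M \colon \cat B \to \cat{Set}^X$, so that its interior comonad is the composite $I_X = (M \otimes (\thg)) \circ [M, \thg]$. The functor $M \otimes (\thg) \colon \cat{Set}^{\cat B^\op} \to \cat{Set}^X$ preserves all colimits and so is accessible; its right adjoint $[M, \thg]$, being a right adjoint between locally presentable categories, is also accessible; and the composite of accessible functors is accessible. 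Hence $I_X$ is accessible.

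For $(2) \Rightarrow (3)$, recall from Remark~\ref{discrete} that $\cat O(X)$ is always a cocomplete elementary topos, so the task is only to upgrade ``cocomplete'' to ``Grothendieck''. Here I would use the characterisation of Grothendieck toposes as precisely the locally presentable elementary toposes: a cocomplete elementary topos with a small generating set is Grothendieck by Giraud's theorem, and local presentability supplies both cocompleteness and a small strong generator. Now $\cat{Set}^X$ is locally presentable and $I_X$ is an accessible comonad, so $\cat O(X) = (\cat{Set}^X)^{I_X}$ is again locally presentable: its forgetful functor creates colimits (the dual of the standard fact that the forgetful functor from the algebras of a monad creates limits), so $\cat O(X)$ is cocomplete; and it is accessible, being assembled from $\cat{Set}^X$ and the accessible functor $I_X$ by the accessible inserter and equifier constructions that define the category of coalgebras. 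A cocomplete accessible category is locally presentable, so $\cat O(X)$ is a locally presentable topos, hence Grothendieck.

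For $(3) \Rightarrow (1)$, which is the conceptual heart of the argument, suppose $\cat O(X)$ is a Grothendieck topos. By Remark~\ref{discrete} the cofree/forgetful adjunction yields a geometric surjection $p \colon \cat{Set}^X \to \cat O(X)$ with $p^\ast p_\ast \cong I_X$. Being Grothendieck, $\cat O(X)$ is the category of sheaves on some small site, and so admits a geometric inclusion $i \colon \cat O(X) \to \cat{Set}^{\cat B^\op}$ for a small category $\cat B$. The composite $i \circ p \colon \cat{Set}^X \to \cat{Set}^{\cat B^\op}$ corresponds, by the universal property of the presheaf topos (Diaconescu's theorem), to a flat functor $M \colon \cat B \to \cat{Set}^X$, that is, to a basis on $X$. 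By Remark~\ref{factor}, the ionad generated by $M$ is the one determined by the surjective part of the (surjection, inclusion)-factorisation of $i \circ p$; but $p$ is a surjection and $i$ an inclusion, so by uniqueness of that factorisation this surjective part is $p$ itself. Hence $M$ generates the ionad $(X, p^\ast p_\ast) = (X, I_X)$, which is $X$.

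The step I expect to be the main obstacle is the accessibility claim inside $(2) \Rightarrow (3)$: that the category of coalgebras for an accessible comonad on a locally presentable category is again locally presentable. The remaining ingredients---the (surjection, inclusion)-factorisation of geometric morphisms, Diaconescu's theorem, and Giraud's theorem---are standard, but this step is where condition~(2) does genuine work, requiring the machinery of accessible categories (accessible PIE-limits together with comonadic cocompleteness) rather than merely formal adjunction bookkeeping.
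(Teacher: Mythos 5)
Your proposal is correct, and it follows the same overall cycle $(1) \Rightarrow (2) \Rightarrow (3) \Rightarrow (1)$ as the paper, but two of the three implications are implemented with genuinely different tools. For $(1) \Rightarrow (2)$, the paper argues by hand: it chooses $\lambda$ so that every $MB$ is $\lambda$-presentable in $\cat{Set}^X$ and computes directly that $[M, \thg]$ preserves $\lambda$-filtered colimits; you instead invoke the general facts that a right adjoint between locally presentable categories is accessible and that accessible functors compose. This is a legitimate shortcut, trading an explicit computation for heavier (but standard) machinery. For $(2) \Rightarrow (3)$ the paper is deliberately terse, and your expansion---cocompleteness via comonadic creation of colimits, accessibility of the coalgebra category via inserters and equifiers in $\cat{ACC}$, and the identification of Grothendieck toposes with locally presentable elementary toposes---is exactly the right filling; indeed the paper itself uses the same Makkai--Par\'e inserter/equifier argument later, in the proof of Theorem~\ref{bioncocomp}. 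For $(3) \Rightarrow (1)$ your route differs most: the paper chooses a small dense full subcategory $\cat B \rightarrowtail \cat O(X)$ closed under finite limits, takes $M$ to be its composite with the forgetful functor $U_X$, and uses density (the invertible counit of $N \otimes (\thg) \dashv [N,\thg]$) to identify the generated comonad with $I_X$; you instead represent $\cat O(X)$ as sheaves on a small site, compose the resulting geometric inclusion $i$ with the canonical surjection $p \colon \cat{Set}^X \to \cat O(X)$ of Remark~\ref{discrete}, apply Diaconescu's theorem to extract a flat functor, and use uniqueness of the surjection--inclusion factorisation together with Remark~\ref{factor} to conclude. Both arguments at bottom embed $\cat O(X)$ into a presheaf topos and restrict along the embedding; the paper's version is more self-contained, while yours dovetails more cleanly with the topos-theoretic reading of bases. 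One small caveat on wording: the surjective part of the factorisation of $i \circ p$ agrees with $p$ only up to equivalence (concretely, the generated comonad is $(M \otimes \thg) \circ [M,\thg] \cong p^\ast i^\ast i_\ast p_\ast \cong p^\ast p_\ast = I_X$, using $i^\ast i_\ast \cong \Id$), so the basis generates $X$ up to isomorphism of interior comonads rather than on the nose---which is precisely what condition (1) asks for, so no harm is done.
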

\begin{proof}
First we show (1) $\Rightarrow$ (2). Given a basis $M \colon \cat{B} \to
\cat{Set}^X$, we will show that $[M, \thg] \colon \cat{Set}^X \to
\cat{Set}^{\cat B^\op}$ preserves $\lambda$-filtered colimits for some regular
cardinal $\lambda$; it then follows that also its composite with the left
adjoint $M \otimes (\thg)$ will do so. So let $\lambda$ be such that the set of
objects $\{MB \mid B \in \cat B\}$ are all $\lambda$-presentable in
$\cat{Set}^X$ (such a $\lambda$ exists since $\cat{Set}^X$ is locally
presentable). Now given a $\lambda$-filtered diagram $A \colon \cat I \to
\cat{Set}^X$ we calculate that $[M, \colim_i A_i](B)  = \cat{Set}^X(MB,
\colim_i A_i) \cong \colim_i \cat{Set}^X(MB, A_i)
 = \colim_i [M, A_i](B)$
for every $B \in \cat B$ as required. For (2) $\Rightarrow$ (3), we simply
observe that if $I_X$ is an accessible comonad, then its category of coalgebras
$\cat O(X)$ is locally presentable, and hence a Grothendieck topos. Finally we
show that (3) $\Rightarrow$ (1). If $\cat O(X)$ is a Grothendieck topos, then
it is in particular locally presentable; and so we may find a small full dense
subcategory $N \colon \cat B \rightarrowtail \cat O(X)$ which, without loss of
generality, we may take to be closed under finite limits. Then the composite $M
\defeq \cat B \rightarrowtail \cat O(X) \rightarrow \cat{Set}^X$ is a
cartesian functor on $\cat B$, and so gives a basis for an ionad. The interior
comonad of this ionad is isomorphic to that generated by the string of
adjunctions
\begin{equation*}
    \cd{\cat{Set}^{X} \ar@<-4pt>[r]_{\text{cofree}} \ar@{}[r]|{\bot} & {\cat{O}(X)} \ar@<-4pt>[r]_{[N, \thg]} \ar@{}[r]|{\bot} \ar@<-4pt>[l]_{\text{forget}} & {\cat{Set}^{\cat B^\op}} \ar@<-4pt>[l]_{N \otimes (\thg)}}\ \text,
\end{equation*}
but since $\cat B$ is dense in $\cat O(X)$, the counit of the right-hand
adjunction is an isomorphism, and it follows that the interior comonad of the
resultant ionad is isomorphic to $I_X$.
\end{proof}
\begin{Defn}\label{bounddef}
We call an ionad \emph{bounded} if it satisfies any one of the three equivalent
conditions of Proposition~\ref{boundprop}.
\end{Defn}
\begin{Rk}
Every ionad that we meet in mathematical practice is bounded: moreover, it is
quite probable that the existence or otherwise of unbounded ionads is a problem
that is independent of the usual axioms of set theory. A construction given in
\cite[Example B3.1.12]{Johnstone2002Sketches} shows that any inaccessible
cartesian endofunctor of $\cat{Set}$ gives rise to an unbounded ionad on the
two-element set; but the only known construction of an inaccessible, cartesian
endofunctor of $\cat{Set}$, given in~\cite{Blass1976Exact}, requires the
existence of a proper class of measurable cardinals.
\end{Rk}
\begin{Rk}
In Remark \ref{discrete}, we noted that the category of opens of an ionad is
always a topos with enough points. For bounded ionads, we can say more: the
toposes arising as their categories of opens are \emph{precisely} the
Grothendieck toposes with enough points. Indeed, for a topos $\E$ to have
enough points is for the class of all inverse image functors $\E \to \cat{Set}$
to be jointly conservative. In the case of a Grothendieck topos, this implies
the existence of a mere \emph{set} $X$ of inverse image functors with this
property~(as is shown in \cite[Proposition C2.2.12]{Johnstone2002Sketches2}),
and hence of a geometric surjection $\cat{Set}^X \to \E$ exhibiting $\E$ as the
category of opens of an ionad.
\end{Rk}

\section{Maps of ionads}\label{s3}
We now consider the appropriate notion of morphism between ionads. For ordinary
topological spaces $X$ and $Y$, a continuous map is a morphism of underlying
sets $f \colon X \to Y$ such that the induced inverse image mapping $f^{-1}
\colon \P Y \to \P X$ maps open sets to open sets; which is to say that there
exists a (necessarily unique) lifting
\begin{equation*}
    \cd{
        \O(Y) \ar@{-->}[r]^{f^\ast} \ar@{ >->}[d] & \O(X) \ar@{ >->}[d] \\
        \P Y \ar[r]_{f^{-1}} & \P X
   }
\end{equation*}
of $f^{-1}$ as indicated. We are therefore led to propose:

\begin{Defn}\label{ctsmapdef}
A \emph{continuous map} of ionads $X \to Y$ is a function $f \colon X \to Y$ of
the underlying sets together with a lifting
\begin{equation}\label{mapdef}
    \cd{
        \cat O(Y) \ar[r]^{f^\ast} \ar[d]_{U_Y} & \cat O(X) \ar[d]^{U_X} \\
        \cat{Set}^Y \ar[r]_{f^{-1}} & \cat{Set}^X
   }
\end{equation}
of $f^{-1} \defeq \cat{Set}^f$ through the corresponding categories of open
sets. We write $\cat{Ion}$ for the category of ionads and continuous maps, and
$\cat{BIon}$ for the full subcategory determined by the bounded ionads.
\end{Defn}
\begin{Rk}
In accordance with the notational convention established in Remark~\ref{not1},
we may choose to denote a continuous map of ionads by naming its underlying map
of sets $f \colon X \to Y$ whilst leaving the corresponding lifting $f^\ast
\colon \cat O(Y) \to \cat O(X)$ implicit.
\end{Rk}
\begin{Rk}\label{ctstopos}
Observe that if $f$ is an ionad morphism, then $f^\ast \colon \cat O(Y) \to
\cat O(X)$ is cartesian, because finite limits are preserved by $f^{-1}.U_Y$
and reflected by $U_X$. Moreover, since $U_Y$ and $U_X$ are comonadic and $\cat
O(Y)$ has equalisers, the adjoint lifting theorem~\cite{Johnstone1975Adjoint}
permits us to lift the right adjoint $\Pi_f$ of $f^{-1}$ to a right adjoint
$f_\ast$ for $f^\ast$, and so to make $f^\ast$ into the inverse image part of a
geometric morphism $\cat O(X) \to \cat O(Y)$. This construction yields a
functor\footnote{With the usual definition of geometric morphism, this is
really only a \emph{pseudo}functor, because we must choose a right adjoint
$f_\ast$ for each $f^\ast$, and in general cannot expect these choices to
satisfy $g_\ast f_\ast = (gf)_\ast$, but only $g_\ast f_\ast \cong (gf)_\ast$.
However, we shall take the slightly non-standard definition of a geometric
morphism $\E \to \F$ as a left adjoint cartesian functor $\F \to \E$; whereupon
we do indeed obtain a genuine functor $\cat{Ion} \to \cat{Top}$} $\cat O(\thg)
\colon \cat{Ion} \to \cat{Top}$, which is analogous to the functor $\O(\thg)
\colon \cat{Sp} \to \cat{Loc}$ assigning to every topological space its locale
of open sets.
\end{Rk}

\begin{Ex}\label{alexmaps}
If ${\cat C}$ and $\cat D$ are small categories, then to give a continuous map
$A(\cat C) \to A(\cat D)$ between the corresponding Alexandroff ionads is to
give a function $f \colon \ob \cat C \to \ob \cat D$ together with a lifting
\begin{equation}\label{lift}
    \cd{
        \cat{Set}^{\cat D} \ar[r]^{f^\ast} \ar[d] & \cat{Set}^{\cat C} \ar[d] \\
         \cat{Set}^{\ob \cat D} \ar[r]_{f^{-1}} & \cat{Set}^{\ob \cat C}\rlap{ .}
   }
\end{equation}
In particular, any extension of $f$ to a functor $F \colon \cat C \to \cat D$
determines such a lifting by taking $f^\ast = \cat{Set}^F$, and so the
assignation $\cat C \mapsto A(\cat C)$ extends to a functor $A \colon \cat{Cat}
\to \cat{BIon}$. In fact, this functor is fully faithful. To see this, we must
prove that every lifting in~\eqref{lift} is induced by a unique extension of
$f$ to a functor. So given such a lifting $f^\ast$, we must construct for each
$g \colon x \to x'$ in ${\cat C}$ a morphism $F(g) \colon fx \to fx'$. Note
first that commutativity in~\eqref{lift} forces $f^\ast(H)(x) = H(fx)$ and
$f^\ast(H)(x') = H(fx')$ for every $H \in \cat{Set}^{\cat D}$. In particular,
taking $H = y_{fx}$, we obtain a map of sets
\begin{equation*}
    f^\ast(y_{fx})(g) \colon \cat D(fx, fx) \to \cat D(fx, fx')
\end{equation*}
and evaluating this at $1_{fx}$ yields the required morphism $fx \to fx'$.
Straightforward diagram chasing shows this assignation to be functorial, and
that the resultant functor uniquely induces the lifting $f^\ast$. Thus we have
a full embedding $A \colon \cat{Cat} \to \cat{BIon}$; in
Remark~\ref{alexspecionad} below, we will see that this embedding is in fact
coreflective.\end{Ex}
\begin{Rk}\label{alternatemap}
To give a continuous map of ionads $(X, I) \to (Y, J)$ is equally well to give
a function $f \colon X \to Y$ and a natural transformation $\delta \colon
f^{-1} J \Rightarrow I f^{-1}$ such that the diagrams
\begin{equation}\label{twoaxioms}
    \cd[@+0.3em]{
    f^{-1} J \ar[r]^-{f^{-1} \epsilon} \ar[d]_{\delta} & f^{-1} \\
    I f^{-1} \ar[ur]_{\epsilon f^{-1}}} \qquad \text{and} \qquad
    \cd[@C+0.5em]{
    f^{-1} J \ar[r]^-{f^{-1} \Delta} \ar[d]_{\delta} & f^{-1} JJ \ar[r]^{\delta J} & I f^{-1} J \ar[d]^{I \delta} \\
    I f^{-1} \ar[rr]_{\Delta f^{-1} } & & II f^{-1}}
\end{equation}
commute; that is, such that the pair $(f^{-1}, \delta)$ is a comonad
morphism$(\cat{Set}^Y, J) \to (\cat{Set}^X, I)$ in the sense
of~\cite{Street1972formal}. The passage between the two descriptions is as
follows: given $\delta \colon f^{-1} J \Rightarrow I f^{-1}$, we define the
corresponding $f^\ast \colon \cat O(Y) \Rightarrow \cat O(X)$ by
$f^\ast(a\colon A \to JA) = \delta_A.f^{-1}a \colon f^{-1}A \to If^{-1}A$.
Conversely, given $f^\ast$, we obtain the $A$-component of the corresponding
$\delta$ by applying $f^\ast$ to the cofree $J$-coalgebra $\Delta_A \colon JA
\to JJA$---yielding an $I$-coalgebra $f^\ast(\Delta_A) \colon f^{-1}JA \to
If^{-1}JA$---and then postcomposing with $If^{-1}\epsilon_A \colon If^{-1}JA
\to If^{-1}A$.
\end{Rk}
\begin{Ex}\label{basisex}
Let $Y$ be the ionad generated by a basis $M \colon \cat B \to \cat{Set}^Y$. By
the preceding Remark, to give a continuous map $X \to Y$ is to give a function
$f \colon X \to Y$ together with a natural transformation $f^{-1} . M \otimes
[M, \thg] \Rightarrow I_X . f^{-1}$ satisfying the axioms of~\eqref{twoaxioms}.
Now by virtue of the adjunction $M\otimes (\thg) \dashv [M, \thg]$, to give
this natural transformation is equally well to give a natural transformation
$f^{-1} . M \otimes (\thg) \Rightarrow I_X . f^{-1} . M\otimes (\thg) \colon
\cat{Set}^{\cat B^\op} \to \cat{Set}^X$; and since both $f^{-1}$ and
$I_X.f^{-1}$ preserve colimits, such a natural transformation is uniquely
determined by a natural transformation $\alpha \colon f^{-1}.M \Rightarrow
I_X.f^{-1}.M \colon \cat B \to \cat{Set}^X$. Under these correspondences, the
two axioms of~\eqref{twoaxioms} become two axioms on $\alpha$ which say exactly
that it equips $f^{-1}.M$ with the structure of a coalgebra for the comonad
$(I_X)^{\cat B}$ on $(\cat{Set}^X)^{\cat B}$. But to give such a coalgebra
structure on $f^{-1}.M$ is equally well to give a lifting
%
%
%
%
\begin{equation}\label{lifting2}
  \cd{ \cat B \ar[d]_{M} \ar[r]^{f'} & \cat O(X) \ar[d]^{U_X} \\
	\cat{Set}^Y \ar[r]_-{f^{-1}} & \cat{Set}^X
  }
\end{equation}
of $f^{-1}.M$ through $\cat O(X)$, the category of $I_X$-coalgebras.

A consequence of this is that the collection of ionad morphisms $X \to Y$ will
be a set whenever $Y$ is bounded. For in this case, continuous maps are given
by diagrams as in~\eqref{lifting2}; there are only a set of functions $f \colon
X \to Y$; and for every such such $f$, there are only a set of liftings $f'$,
since $\cat B$ is small and each $H \in \cat{Set}^X$ admits only a set of
$I_X$-coalgebra structures. In particular, we deduce that the category
$\cat{BIon}$ of bounded ionads is locally small.
\end{Ex}
\begin{Rk}\label{counit}
Writing $\cat{Set}^{(\thg)} \colon \cat{Set} \to \cat{CAT}^\op$ for the functor
sending $X$ to $\cat{Set}^X$ and $f \colon X \to Y$ to $f^{-1} \colon
\cat{Set}^Y \to \cat{Set}^X$, we may regard $\cat{Ion}$ as a full subcategory
of the comma category $\cat{CAT}^\op \downarrow \cat{Set}^{(\thg)}$. The
preceding example shows that, for any basis $M \colon \cat B \to \cat{Set}^Y$,
the corresponding ionad $Y$ is a coreflection of $M$ into this full
subcategory. The counit of this coreflection is a map
\begin{equation*}
  \cd[@!C]{\cat B \ar[rr]^{\overline M} \ar[dr]_M & & \cat O(Y) \ar[dl]^{U_Y} \\ & \cat{Set}^Y}
\end{equation*}
composition with which induces the bijection between squares of the
form~\eqref{mapdef} and of the form~\eqref{lifting2}.
\end{Rk}
\begin{Ex}\label{topembed}
Let $f \colon X \to Y$ be a continuous map of topological spaces. We have a
commutative diagram
\begin{equation*}
\cd{
    \O(Y) \ar[r]^{f^\ast} \ar@{ >->}[d] & \O(X) \ar@{ >->}[d] \\
    \P Y \ar@{ >->}[d] & \P X \ar@{ >->}[d] \\
    \cat{Set}^Y \ar[r]_{f^{-1}} & \cat{Set}^X
}
\end{equation*}
and so applying the coreflection of the preceding Remark, we obtain a
continuous map of ionads $\Sigma X \to \Sigma Y$. Thus the assignation $X
\mapsto \Sigma X$ extends to a functor $\Sigma \colon \cat{Sp} \to \cat{Ion}$,
which in fact exhibits $\cat{Sp}$ as a full reflective subcategory of
$\cat{Ion}$ (and indeed also of $\cat{BIon}$). To see this, we first exhibit a
left adjoint $\Lambda$ for $\Sigma$. Given an ionad $X$, we observe that $\P X$
is the lattice of subobjects of $1$ inside $\cat{Set}^X$; and that since $I_X$
is cartesian, it restricts and corestricts to this lattice, thus yielding an
interior operator $i \colon \P X \to \P X$. We claim that the space $\Lambda X$
with this interior operator gives a reflection of $X$ along the functor
$\Sigma$. To see this, first observe that the open sets of $\Lambda X$, which
are the fixpoints of~$i$, are precisely the subobjects of $1$ inside $\cat
O(X)$; and so we have a pullback diagram
\begin{equation}\label{pbdiag}
 \cd{\O(\Lambda X) \ar@{ >->}[r] \ar@{ >->}[d] & \cat O(X) \ar[d] \\ \P X \ar@{ >->}[r] & \cat{Set}^X\rlap{ .}}
\end{equation}
Now if $Y$ is a topological space, then by Example~\ref{basisex}, to give an
ionad map $X \to \Sigma Y$ is to give a function $f \colon X \to Y$ and a
functor $f' \colon \O(Y) \to \cat O(X)$ making the following square commute:
\begin{equation*}
\cd{
    \O(Y) \ar[r]^{f'} \ar[d]_M & \cat O(X) \ar[d]^{U_X} \\
    \cat{Set}^Y \ar[r]_{f^{-1}} & \cat{Set}^X\rlap{ .}
}
\end{equation*}
But the lower composite $f^{-1} . M$ factors through the inclusion $\P X
\rightarrowtail \cat{Set}^X$ (since $f^{-1}$ preserves finite limits, and hence
subobjects of $1$); and so, since~\eqref{pbdiag} is a pullback, $f'$ must
factor through the inclusion $\O(\Lambda X) \rightarrowtail \cat O(X)$. But
this implies that there is at most one $f'$ lifting $f^{-1}$, and that such a
lifting will exist precisely when $f$ is continuous as a map of spaces $\Lambda
X \to Y$. Thus for each $X$, we have established a bijection $\cat{Ion}(X,
\Sigma Y) \cong \cat{Sp}(\Lambda X, Y)$---whose naturality in $Y$ is easily
checked---so that the assignation $X \mapsto \Lambda X$ extends to a functor
$\Lambda \colon \cat{Ion} \to \cat{Sp}$ left adjoint to $\Sigma$. It remains to
observe that for any space $Z$, we have $\Lambda \Sigma Z = Z$, so that the
adjunction $\Lambda \dashv \Sigma$ is a reflection as required.
\end{Ex}

\begin{Rk}
The reflection constructed in the previous example again has a familiar
topos-theoretic interpretation. Given an ionad $X$, we may factorise the unique
geometric morphism $\cat{O}(X) \to \cat{Set}$ (note that this exists since
$\cat O(X)$ is cocomplete) as
\begin{equation*}
    \cat{O}(X) \xrightarrow{\ i\ } \E \xrightarrow{\ p\ } \cat{Set}
\end{equation*}
where $i$ is hyperconnected and $p$ is localic (cf.~\cite[\S
A4.6]{Johnstone2002Sketches}). That $p$ is localic means that $\E$ is
equivalent to $\cat{Sh}(K)$ for some locale $K$; whilst that $i$ is
hyperconnected means in particular that it is a surjection. Hence so also is
the composite geometric morphism
\begin{equation}\label{compos}
    \cat{Set}^X \xrightarrow{\ \ } \cat{O}(X) \xrightarrow{\ i\ } \E\ \text.
\end{equation}
But since $\cat{Set}^X$ is itself equivalent to the category of sheaves on the
discrete locale $\P X$, the geometric morphism~\eqref{compos} is induced by a
surjective locale morphism $\P X \to K$; and, as we noted in
Example~\ref{discrete}, to give this is to give a topological space with set of
points $X$: which is the reflection of $X$ into $\cat{Sp}$ described above.
\end{Rk}

%

\section{The specialisation enrichment}\label{s5}
Recall that if $X$ is a space, then its set of points may be preordered by the
\emph{specialisation order}, in which $x \leqslant y$ whenever every open set
of $X$ that contains $x$ also contains $y$. The specialisation order induces a
preordering on each hom-set $\cat{Sp}(X,Y)$ in which $f \leqslant g$ iff $fx
\leqslant gx$ for all $x \in X$, and with respect to these preorderings, the
composition functions $\cat{Sp}(Y, Z) \times \cat{Sp}(X, Y) \to \cat{Sp}(X,Z)$
become order-preserving maps. Consequently, this structure enriches $\cat{Sp}$
to a locally preordered $2$-category. We now wish to describe a corresponding
enrichment of $\cat{Ion}$ to a (no longer locally preordered) $2$-category. In
order to do so, we first recast the definition of the $2$-cells of $\cat{Sp}$
in a manner which makes the correct generalisation obvious. Observe that $f
\leqslant g \colon X \to Y$ just when $fx \in U$ implies $gx \in U$ for every
$x \in X$ and open $U \subseteq Y$. This is equivalently to ask that $f^{-1}(U)
\subseteq g^{-1}(U)$ for every open set $U \subseteq Y$: or in other words,
that there should be an inequality
\begin{equation*}
    \cd{
        \O(Y) \ar@/^1.2em/@{-->}[r]^{f^\ast} \ar@/_1.2em/@{-->}[r]_{g^\ast} \ar@{}[r]|{\leqslant}  &  \O(X) \\
   }
\end{equation*}
between the (unique) liftings of $f^{-1}$ and $g^{-1}$ through the
corresponding open set lattices. We are therefore led to propose:
\begin{Defn}\label{specdef}
A \emph{specialisation} between ionad morphisms $f, g \colon X \to Y$ is a
natural transformation
\begin{equation}\label{spec2cell}
    \cd{
        \cat O(Y) \ar@/^1.2em/[r]^{f^\ast} \ar@/_1.2em/[r]_{g^\ast} \dtwocell{r}{\alpha} &  \cat O(X)
   }
\end{equation}
subject to no further conditions (in particular, this means no compatibility
conditions with $f^{-1}$ or $g^{-1}$). The ionads, continuous maps and
specialisations form a $2$-category, for which we reuse the notation
$\cat{Ion}$; similarly, we write $\cat{BIon}$ to denote the full and locally
full sub-$2$-category spanned by the bounded ionads.
\end{Defn}

%

\begin{Rk}
We saw in Remark~\ref{ctstopos} that the assignation $(X, I) \mapsto \cat O(X)$
yields a functor $\cat O(\thg) \colon \cat{Ion} \to \cat{Top}$, with a
continuous map $(f, f^\ast)$ of ionads being sent to the geometric morphism
whose inverse image part is $f^\ast$. Consequently, the specialisations between
continuous maps are in bijection with the geometric transformations between the
corresponding geometric morphisms, so that the functor $\cat O(\thg)$ extends
to a locally fully faithful $2$-functor.
\end{Rk}

\begin{Ex}
If $F, G \colon \cat C \to \cat D$ are functors between small categories, then
the specialisations $A(F) \Rightarrow A(G) \colon A(\cat C) \Rightarrow A(\cat
D)$ are given by natural transformations $\cat{Set}^F \Rightarrow \cat{Set}^G
\colon \cat{Set}^{\cat D} \to \cat{Set}^{\cat C}$; and these are in bijection
with natural transformations $F \Rightarrow G \colon \cat C \to \cat D$. Thus
the ordinary functor $A \colon \cat{Cat} \to \cat{BIon}$ extends to a $2$-fully
faithful $2$-functor.
\end{Ex}

\begin{Rk}\label{justify}
%
Recall from Remark~\ref{alternatemap} that a map of ionads $(X, I) \to (Y, J)$
corresponds to a pair $(f, \delta)$ where $f \colon X \to Y$ and $\delta \colon
f^{-1} J \Rightarrow I f^{-1}$ is a natural transformation satisfying two
axioms. If $(f, \delta)$ and $(g, \gamma)$ are a parallel pair of ionad
morphisms given in this manner, then the specialisations between them are in
correspondence with natural transformations $\rho \colon f^{-1} J \Rightarrow
g^{-1}$ making the diagram
\begin{equation}\label{2celldiag}
    \cd[@C+1em]{
      f^{-1} J \ar[r]^-{f^{-1} \Delta} \ar[d]_{f^{-1} \Delta} &
      f^{-1} JJ \ar[r]^-{\rho J} &
      g^{-1} J \ar[d]^{\gamma} \\
      f^{-1} JJ \ar[r]_-{\delta J} & I f^{-1} J \ar[r]_-{I \rho} & I g^{-1}
    }
\end{equation}
commute. This is a consequence of~\cite[Section~2.1]{Lack2002Formal}; we
summarise the argument as follows. Given a natural transformation $\alpha
\colon f^\ast \Rightarrow g^\ast \colon \cat O(Y) \to \cat O(X)$, then the
component at $A$ of the corresponding $\rho \colon f^{-1} J \Rightarrow g^{-1}$
is obtained by first evaluating $U_X. \alpha$ at the cofree coalgebra $\Delta_A
\colon JA \to JJA$---which yields a map $f^{-1} JA \to g^{-1} JA$---and then
postcomposing this with $g^{-1} \epsilon_A \colon g^{-1} JA \to g^{-1} A$.
Conversely, if given $\rho \colon f^{-1} J \Rightarrow g^{-1}$
making~\eqref{2celldiag} commute, then the corresponding $\alpha \colon f^\ast
\Rightarrow g^\ast$ has its component at a coalgebra $a \colon A \to JA$ given
by $\rho_A .f^{-1} a \colon f^\ast A \to g^\ast A$.
\end{Rk}

\begin{Ex}\label{basisex2} Recall from Example~\ref{basisex} that if the ionad $Y$ is
generated by the basis $M \colon \cat B \to \cat{Set}^Y$, then ionad morphisms
$X \to Y$ are in bijection with pairs $(f, f')$ where $f \colon X \to Y$ is a
function and $f' \colon \cat B \to \cat O(X)$ a functor making~\eqref{lifting2}
commute. Suppose now that $(f, f')$ and $(g,g')$ are two ionad morphisms
specified in this way. By the preceding Remark, the specialisations between
them correspond with natural transformations $\rho \colon f^{-1} J \Rightarrow
g^{-1}$ making~\eqref{2celldiag} commute, where $J$ is the comonad $M \otimes
[M, \thg]$ generated by the basis $M$. But to give such a $\rho$ is equally
well to give a natural transformation $f^{-1} . M \otimes (\thg) \Rightarrow
g^{-1} . M \otimes (\thg)$ satisfying one axiom; and since both $f^{-1}$ and
$g^{-1}$ preserve colimits, such a natural transformation is determined
uniquely by a natural transformation $\phi \colon f^{-1}.M \Rightarrow
g^{-1}.M$ satisfying one axiom, which is easily shown to amount to the
requirement that $\phi$ should lift
%
through the
forgetful functor $U_X \colon \cat O(X) \to \cat{Set}^X$. Thus we have shown
that to give a specialisation from $(f, f')$ to $(g, g')$ is equally well to
give a natural transformation $\alpha \colon f' \Rightarrow g' \colon \cat{B}
\to \cat{O}(X)$.

We may deduce from this that the category $\cat{Ion}(X, Y)$ is small whenever
$Y$ is a bounded ionad. Indeed, we know from Example~\ref{basisex} that this
category has only a set of objects; moreover, the collection of morphisms
between any two such objects may be identified with the collection of natural
transformations $f' \Rightarrow g'$ for some $f', g' \colon \cat B \to \cat
O(X)$, and this is a set because $\cat B$ is small. In particular, we may
conclude that $\cat{BIon}$ is a locally small $2$-category.
\end{Ex}
\begin{Ex}
Taking the preceding example together with Example~\ref{topembed}, we see that
if $f, g \colon X \to Y$ are continuous maps of topological spaces, then the
specialisations $\Sigma f \Rightarrow \Sigma g \colon \Sigma X \to \Sigma Y$
are given by natural transformations
\begin{equation*}
    \cd[@R-2em]{ & \O(X) \ar@{ >->}[dr] \\
        \O(Y) \ar[ur]^{f^{\ast}} \ar[dr]_{g^{\ast}}  \dtwocell{rr}{\alpha} & & \cat O(\Sigma X)\\
        & \O(X) \ar@{ >->}[ur]}
\end{equation*}
as indicated. But as the embedding $\O(X) \rightarrowtail \cat O(\Sigma X)$ is
full and faithful, any such natural transformation is induced by a unique
natural transformation $f^\ast \Rightarrow g^\ast$; and as $\O(X)$ is a poset,
there can be at most one such, which exists precisely when $f \leqslant g$ in
the $2$-category $\cat{Sp}$. Thus we have shown that the embedding functor
$\Sigma \colon \cat{Sp} \to \cat{BIon}$ extends to a $2$-fully faithful
$2$-functor; and much the same argument shows that the left adjoint of $\Sigma$
extends to a left $2$-adjoint.
\end{Ex}
When at the start of this Section we described the two-dimensional structure of
$\cat{Sp}$, we did so by constructing it from the specialisation order on each
space. By contrast, when we defined the two-dimensional structure of
$\cat{Ion}$ we did so directly; and this raises the question of what the
appropriate ionad-theoretic analogue of the specialisation order should be. In
order to answer this, let us observe that in $\cat{Sp}$, the specialisation
order on a space $X$ is encoded in the two-dimensional structure as the
hom-category $\cat{Sp}(1, X)$. This immediately suggests the following:
\begin{Defn}\label{specfunctor}
The \emph{specialisation functor} $V \colon \cat{BIon} \to \cat{Cat}$ is the
representable functor $\cat{BIon}(1, \thg)$, and its value at a bounded ionad
$X$ is the \emph{specialisation category} of $X$.
\end{Defn}
Observe that we are forced to define $V$ only on the bounded ionads, since if
$X$ is an unbounded ionad, then there is no reason to expect that
$\cat{BIon}(1, X)$ should be a small category (though it will always have a
mere set of objects).
\begin{Rk}\label{alexspec}
In the topological case, the $2$-functor $\cat{Sp}(1, \thg) \colon \cat{Sp} \to
\cat{Poset}$ is right $2$-adjoint to the $2$-functor $\cat{Poset} \to \cat{Sp}$
sending a poset to the corresponding Alexandroff space. We shall see in
Remark~\ref{alexspecionad} below that the corresponding result holds for
ionads: the $2$-functor $V \colon \cat{BIon} \to \cat{Cat}$ of the previous
definition is right $2$-adjoint to the embedding $A \colon \cat{Cat} \to
\cat{BIon}$.
\end{Rk}

\begin{Rk}\label{vx}
We may extract the following explicit description of the specialisation
category $VX$ of a bounded ionad $X$. An object of $VX$ is given by a function
$1 \to X$, which is a point $x \in X$, together with a lifting $x^\ast$ making
\begin{equation*}
    \cd{
        \cat O(X) \ar[r]^-{x^\ast} \ar[d]_{U_X} & \cat{Set} \ar[d]^{\id} \\
        \cat{Set}^X \ar[r]_-{x^{-1}} & \cat{Set}
   }
\end{equation*}
commute. Obviously, there is exactly one such lifting, namely $x^\ast =
x^{-1}.U_X$, and so we may identify objects of $VX$ with elements of $X$. Now
the morphisms $x \to y$ in $VX$ are the specialisations $(x, x^\ast)
\Rightarrow (y, y^\ast)$, and these are given by natural transformations
$x^{-1}.U_X \Rightarrow y^{-1}.U_X \colon \cat O(X) \to \cat{Set}$.

A priori this is as much as we can say; however, if we suppose given some basis
$M \colon \cat B \to \cat{Set}^X$ which generates the ionad $X$ then we can
simplify this description further. For then, by Remark~\ref{basisex2}, we may
identify specialisations $(x, x^\ast) \Rightarrow (y, y^\ast)$ with natural
transformations $x^{-1}.U_X.\overline M \Rightarrow y^{-1}.U_X.\overline M$,
where $\overline M \colon \cat B \to \cat O(X)$ is the coreflection map of
Remark~\ref{counit}; and since $x^{-1}.U_X.\overline M = x^{-1} . M =
M(\thg)(x)$ and likewise $y^{-1}.U_X.\overline M = M(\thg)(y)$, we may identify
these in turn with natural transformations $M(\thg)(x) \Rightarrow M(\thg)(y)
\colon \cat B \to \cat{Set}$. Thus we arrive at the following simple
description of $VX$. Starting from the basis $M \colon \cat B \to \cat{Set}^X$
we may transpose it to a functor $M' \colon X \to \cat{Set}^{\cat B}$, and $VX$
is now obtained by factorising $M'$ as a functor bijective on objects, followed
by one that is fully faithful.
\end{Rk}
\section{Limits and colimits of ionads}\label{lims}
In this section, we describe, with sketches of proofs, the limits and colimits
that exist in the $2$-category of ionads. It turns out that in the category of
\emph{all} ionads, rather few of these exist:
\begin{Prop}\label{ioncolims}
The $2$-category $\cat{Ion}$ has coproducts, tensors by small categories, and a
terminal object.
\end{Prop}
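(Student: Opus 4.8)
The plan is to establish each of the three claimed structures---coproducts, tensors by small categories, and a terminal object---by equipping the evident set-level structure with a universally determined interior comonad, and then verifying the relevant universal property against the description of continuous maps and specialisations developed in Section~\ref{s3}.

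For the \textbf{terminal object}, I would take the one-point ionad $1$, whose set of points is a singleton and whose interior comonad on $\cat{Set}^1 \simeq \cat{Set}$ is the identity. Its category of opens is then $\cat{Set}$ itself. A continuous map $X \to 1$ is a function $X \to 1$ (of which there is exactly one) together with a lifting of $x^{-1} \colon \cat{Set} \to \cat{Set}^X$ through $\cat O(X)$; since the interior comonad on the terminal ionad is trivial, the adjoint-functor description in Remark~\ref{alternatemap} forces a unique such lifting, namely the composite $\cat{Set} \to \cat{Set}^X \to \cat O(X)$ using the cofree coalgebra functor. Uniqueness of the specialisation-$2$-cells is likewise immediate because $\cat{Set}^1$ is terminal among the relevant data, so $1$ is terminal in the $2$-categorical sense.

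For \textbf{coproducts}, given a family $(X_j)_{j \in J}$ of ionads I would form the disjoint union $X \defeq \coprod_j X_j$ on points, and observe that $\cat{Set}^X \simeq \prod_j \cat{Set}^{X_j}$; the product comonad $I_X \defeq \prod_j I_{X_j}$ is again cartesian, since finite limits in a product category are computed componentwise, so $X$ is an ionad. The category of opens then satisfies $\cat O(X) \simeq \prod_j \cat O(X_j)$, because coalgebras for a product comonad on a product category are tuples of coalgebras. To confirm the universal property, a continuous map $X \to Y$ is (by the lifting description~\eqref{mapdef}) a function $X \to Y$ together with a compatible lifting; since $\cat{Set}^X$ and $\cat O(X)$ decompose as products over $J$, such data decompose as a $J$-indexed family of continuous maps $X_j \to Y$, and the same componentwise argument applies to specialisations. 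This gives the required $2$-natural bijection $\cat{Ion}(X, Y) \simeq \prod_j \cat{Ion}(X_j, Y)$.

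The \textbf{tensor by a small category} $\cat C$ is the most substantial step and the one I expect to be the main obstacle, since the tensor $\cat C \otimes X$ must be a single ionad whose hom-$2$-functor out of it reproduces the cotensor $\cat C \pitchfork \cat{Ion}(X, \thg)$, i.e.\ one satisfying $\cat{Ion}(\cat C \otimes X, Y) \simeq \cat{Cat}(\cat C, \cat{Ion}(X, Y))$ $2$-naturally. The natural guess is to keep the \emph{same} set of points as $X$ and to modify only the comonad so as to absorb the indexing by $\cat C$; concretely, I would look for an interior comonad on $\cat{Set}^X$ whose coalgebras are functors $\cat C \to \cat O(X)$ lying suitably over $\cat{Set}^X$. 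Because the $2$-cells of $\cat{Ion}$ are, by Definition~\ref{specdef}, natural transformations between the $f^\ast$ with \emph{no} compatibility condition, a specialisation into $Y$ records exactly a natural transformation of functors $\cat O(Y) \to \cat O(X)$, and this is what allows a $\cat C$-indexed family of maps-with-specialisations to be repackaged as a single map out of the tensor. The delicate points will be checking that the resulting comonad is genuinely cartesian (so that $\cat C \otimes X$ is an ionad at all) and that the correspondence is $2$-natural rather than merely a bijection on objects; here the coalgebraic reformulations in Example~\ref{basisex} and Remark~\ref{justify}, together with the fact that specialisations impose no compatibility with $f^{-1}$, should do the bookkeeping, but verifying that the tensor comonad preserves finite limits is where the real work lies.
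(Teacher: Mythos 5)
Your terminal object and coproducts coincide with the paper's constructions: the paper takes $(1,\id_{\cat{Set}})$, and for coproducts the set $\sum_k X_k$ with the comonad $\prod_k I_k$ transported across $\cat{Set}^{\sum_k X_k}\cong\prod_k\cat{Set}^{X_k}$, exactly as you do, and your componentwise verification of the universal property is sound. One slip in your terminal-object argument: the composite of $f^{-1}\colon\cat{Set}\to\cat{Set}^X$ with the cofree functor $\cat{Set}^X\to\cat O(X)$ is \emph{not} a lifting in the sense of~\eqref{mapdef}, since applying $U_X$ to it returns $I_X\circ f^{-1}$ rather than $f^{-1}$. The unique lifting instead sends a set $S$ to the constant family $f^{-1}S$ equipped with the coalgebra structure forced by naturality: $f^{-1}S$ is a coproduct of copies of the terminal object $1$ of $\cat{Set}^X$, the component of the structure map at $1$ is the unique map $1\to I_X 1\cong 1$ (cartesianness), and naturality with respect to the points $1\to S$ then determines, and supplies, all other components; a similar argument shows the only specialisation of the resulting map into itself is the identity. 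So that part survives, but with a different argument from the one you indicate.

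The genuine gap is the tensor. Keeping the point set $X$ cannot work, because the universal property $\cat{Ion}(\cat C\otimes X,Y)\cong\cat{Cat}(\cat C,\cat{Ion}(X,Y))$ must already hold at the level of underlying functions: a functor $\cat C\to\cat{Ion}(X,Y)$ assigns to each object $c$ a continuous map whose underlying function $f_c\colon X\to Y$ varies with $c$, so the single underlying function of the corresponding map out of $\cat C\otimes X$ must record all of them, which forces the point set of $\cat C\otimes X$ to be $\ob\cat C\times X$. Concretely, take $X=1$ and $\cat C$ discrete on two objects: the right-hand side then contains pairs of \emph{distinct} points of $Y$, while a map out of a one-point ionad can name only one point; indeed, by Remark~\ref{alexspecionad} we have $A\cong(\thg)\otimes 1$, so $\cat C\otimes 1$ must be the Alexandroff ionad $A(\cat C)$, whose point set is $\ob\cat C$. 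Relatedly, there is no comonad on $\cat{Set}^X$ whose coalgebras are ``functors $\cat C\to\cat O(X)$ lying over $\cat{Set}^X$'': the natural forgetful functor out of $\cat O(X)^{\cat C}$ lands in $(\cat{Set}^X)^{\ob\cat C}\cong\cat{Set}^{\ob\cat C\times X}$, not in $\cat{Set}^X$. This is exactly what the paper exploits: it defines the interior comonad on $\cat{Set}^{\ob\cat C\times X}$ as the one generated by composing the cofree/forgetful adjunction, raised to the power $\ob\cat C$, with the adjunction $\cat O(X)^J\dashv\mathrm{Ran}_J$ for the inclusion $J\colon\ob\cat C\to\cat C$ (here $\mathrm{Ran}_J$ exists because $\cat O(X)$, being a cocomplete topos, is complete). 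Both left adjoints strictly create equalisers, so the composite adjunction is strictly comonadic, giving $\cat O(\cat C\otimes X)\cong\cat O(X)^{\cat C}$, from which the universal property follows; and the cartesianness you were worried about comes for free, since the comonad is a right adjoint followed by the finite-limit-preserving left adjoint $(\text{forget})^{\ob\cat C}\circ\cat O(X)^J$.
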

\begin{proof}
The terminal object of $\cat{Ion}$ is given by $(1, \id_{\cat{Set}})$. Given a
family $(X_k, I_k)_{k \in K}$ of ionads, their coproduct has as its underlying
set $\sum_{k} X_k$ and as its interior comonad the composite
\begin{equation*}\textstyle
    \cat{Set}^{\sum_{k} X_k} \xrightarrow{\ \cong\ } \prod_{k} \cat{Set}^{X_k} \xrightarrow{\ \prod_{k} I_k\ }
    \prod_{k} \cat{Set}^{X_k} \xrightarrow{\ \cong\ }\cat{Set}^{\sum_{k} X_k}\ \text.
\end{equation*}
As regards tensor products, suppose given an ionad $X$ and a small category
$\cat C$. We define the ionad $\cat C \otimes X$ to have underlying set $\ob
\cat C \times X$, and interior comonad generated by the composite adjunction
\begin{equation}\label{adjtensor}
    \cd[@C+2.5em]{
      \cat{Set}^{\ob \cat C \times X} \cong (\cat{Set}^{X})^{\ob \cat C}
        \ar@<-4pt>[r]_-{(\text{cofree})^{\ob \cat C}}
        \ar@{}[r]|-{\bot} &
      {\cat{O}(X)^{\ob \cat C}}
        \ar@<-4pt>[r]_-{\mathrm{Ran}_J}
        \ar@{}[r]|-{\bot}
        \ar@<-4pt>[l]_-{(\text{forget})^{\ob \cat C}} &
      {\cat{O}(X)^{\cat C}}
        \ar@<-4pt>[l]_-{\cat{O}(X)^J}\rlap{ ,}
      }
\end{equation}
where $J \colon \ob \cat C \to \cat C$ is the canonical inclusion. Observe that
in order for $\mathrm{Ran}_J$ to exist here we must know $\cat O(X)$ to be
complete: but being a topos, it is complete if and only if cocomplete, and it
is certainly the latter by virtue of being comonadic over $\cat{Set}^X$. Since
both left adjoint functors in~\eqref{adjtensor} strictly create equalisers, the
adjunction they generate is strictly comonadic, so that $\cat O(\cat C \otimes
X) \cong \cat O(X)^{\cat C}$: from which the universal property of the tensor
product follows easily.
\end{proof}

However, on restricting to the $2$-category of bounded ionads, the situation is
much more satisfying:

\begin{Thm}\label{bioncocomp}
$\cat{BIon}$ is cocomplete as a $2$-category.
\end{Thm}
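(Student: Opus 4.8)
The plan is to reduce the claim to the existence of a small generating class of $2$-colimits and then to build those individually. By the standard theory of $\cat{Cat}$-enriched colimits (as developed, for example, in Kelly's \emph{Basic concepts of enriched category theory}), a $2$-category is cocomplete as soon as it admits all tensors (copowers) by small categories together with all conical colimits, and the latter are in turn generated by small coproducts and coequalizers. Proposition~\ref{ioncolims} already constructs coproducts and tensors by small categories in the whole of $\cat{Ion}$, so the argument splits into two tasks: checking that these constructions restrict to $\cat{BIon}$, and constructing coequalizers of bounded ionads.

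For the first task I would use the explicit descriptions of $\cat O$ on these colimits together with the characterisation of boundedness in Proposition~\ref{boundprop}. The proof of Proposition~\ref{ioncolims} gives $\cat O(\coprod_k X_k) \cong \prod_k \cat O(X_k)$ and $\cat O(\cat C \otimes X) \cong \cat O(X)^{\cat C}$; since a set-indexed product of Grothendieck toposes, and a functor category $[\cat C, \E]$ from a small category into a Grothendieck topos, are again Grothendieck toposes, both colimits remain bounded. The $2$-dimensional (conical) universal property on specialisations is then inherited from the computation of hom-categories, using Remark~\ref{justify} to identify specialisations in comonad-morphism terms; this part is routine.

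The substance of the proof is the construction of coequalizers, which I expect to be the main obstacle. Given $f, g \colon X \rightrightarrows Y$, the underlying set of the coequalizer must be the set-coequalizer $Q$, with quotient $q \colon Y \to Q$, so the work is to equip $Q$ with a universal cartesian comonad $I_Q$. One cannot simply transport $I_Y$ along $q$: writing $\Sigma_q \dashv q^{-1} \dashv \Pi_q$, the evident candidate $\Sigma_q\,I_Y\,q^{-1}$ fails to preserve finite limits, because $\Sigma_q$ does not. Instead I would characterise $\cat O(Q)$ as the largest left-exact localisation (subtopos) of $\cat{Set}^Q$ whose image under $q^{-1}$ lies in $\cat O(Y)$ and is equalised by $f^\ast$ and $g^\ast$. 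Such a largest subtopos exists because the subtoposes of a Grothendieck topos form a complete lattice, and being a subtopos of a Grothendieck topos it is again one, so the resulting ionad is automatically bounded by Proposition~\ref{boundprop}; the associated surjection $\cat{Set}^Q \to \cat O(Q)$ then supplies $I_Q$ in the sense of Remark~\ref{discrete}.

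It remains to verify the universal property, and here lies the delicate point: the localisation must be tuned so that a continuous map $w \colon Y \to W$ with $wf = wg$ factors uniquely through $q$ on opens, and so that this factorisation upgrades to an \emph{isomorphism} of hom-categories, exhibiting $\cat{BIon}(Q, W)$ as the full subcategory of $\cat{BIon}(Y, W)$ spanned by those $w$ with $wf = wg$, rather than a mere bijection of objects. I would establish the object-level bijection by transporting liftings across the localisation, and the $2$-cell level by the explicit description of Remark~\ref{justify}: a specialisation out of $Q$ is a natural transformation satisfying the coherence~\eqref{2celldiag}, and the universal property of the localisation forces these to correspond exactly to specialisations out of $Y$ whose whiskerings by $f$ and $g$ agree. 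Assembling coproducts, coequalizers and tensors through the generation theorem then yields cocompleteness of $\cat{BIon}$.
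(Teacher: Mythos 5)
Your overall decomposition is the same as the paper's (coproducts and tensors restrict easily to $\cat{BIon}$, reducing everything to coequalisers), but the coequaliser construction at the heart of your proposal rests on a category error. For an ionad with point set $Q$, the category of opens is the category of coalgebras for a cartesian comonad on $\cat{Set}^Q$; equivalently, it is the \emph{codomain of a geometric surjection} $\cat{Set}^Q \to \cat O(Q)$ (Remark~\ref{discrete}). It is emphatically \emph{not} a subtopos, i.e.\ left-exact localisation, of $\cat{Set}^Q$: surjections and inclusions are opposite kinds of geometric morphism. Concretely, the only Grothendieck topologies on a discrete category $Q$ are obtained by declaring the empty sieve to cover the points in some subset, so the subtoposes of $\cat{Set}^Q$ are exactly the $\cat{Set}^{Q'}$ for $Q' \subseteq Q$. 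Your ``largest subtopos'' can therefore only ever produce a power of $\cat{Set}$, and this fails already for trivial coequalisers: taking $f = g$, the coequaliser of $f, g \colon X \rightrightarrows A(\cat C)$ is $A(\cat C)$ itself, whose category of opens $\cat{Set}^{\cat C}$ is not of the form $\cat{Set}^{Q'}$ unless $\cat C$ is discrete. The complete-lattice argument you invoke is special to subtoposes and has no counterpart for topos \emph{quotients}, which is what would actually be needed; this is precisely why coequalisers are the hard part. A second, related defect: the condition ``image under $q^{-1}$ lies in $\cat O(Y)$'' is not well-posed, since $U_Y \colon \cat O(Y) \to \cat{Set}^Y$ is faithful but not full---an $I_Y$-coalgebra structure is structure on an object of $\cat{Set}^Y$, not a property of it---so membership in $\cat O(Y)$ is not something an object of $\cat{Set}^Y$ can simply satisfy.

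The paper's proof shows what is needed instead. One forms the equaliser $\cat E$ of $f^\ast, g^\ast \colon \cat O(Y) \rightrightarrows \cat O(X)$ in $\cat{CAT}$ (so its objects are coalgebras \emph{with} their structure, equalised by $f^\ast$ and $g^\ast$), together with the induced functor $V \colon \cat E \to \cat{Set}^Z$ over the set-level coequaliser $Z$. Accessibility of $\cat E$ is extracted from the closure of accessible categories under inserters and equifiers (Makkai--Par\'e); strict creation of colimits and finite limits by $U_X$ and $U_Y$ shows $V$ creates them too, so $\cat E$ is cocomplete, hence locally presentable; the special adjoint functor theorem then gives $V$ a right adjoint, and strict comonadicity of $V$ exhibits $\cat E$ as the category of opens of a bounded ionad on $Z$. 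None of these steps is recoverable from the subtopos lattice of $\cat{Set}^Z$, so the proposal as it stands does not prove the theorem.
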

\begin{proof}[Proof (sketch)]
It is easy to see that the constructions of coproducts and tensor products in
$\cat{Ion}$ restrict to $\cat{BIon}$.
It therefore suffices to prove that $\cat{BIon}$ has coequalisers. Given a
parallel pair
 $f, g \colon X \rightrightarrows Y$, we first form the coequaliser $q \colon Y \to Z$ of the
functions between the underlying sets of points, and then the equaliser $E
\colon \cat{E} \to \cat{O}(Y)$ of $f^\ast, g^\ast \colon \cat O (Y)
\rightrightarrows \cat O (X)$ in $\cat{CAT}$. Observing that $q^{-1}$ is the
equaliser of $f^{-1}$ and $g^{-1}$, we thereby induce a morphism $V \colon \cat
E \to \cat{Set}^Z$ fitting into a commutative diagram:
\begin{equation*}
  \cd{
     \cat{E} \ar[r]^-{E} \ar[d]_{V} & \cat{O}(Y) \ar@<3pt>[r]^{f^\ast} \ar@<-3pt>[r]_{g^\ast} \ar[d]_{U_Y} & \cat{O}(X) \ar[d]^{U_X} \\
     \cat{Set}^Z \ar[r]_{q^{-1}} & \cat{Set}^Y \ar@<3pt>[r]^{f^{-1}} \ar@<-3pt>[r]_{g^{-1}} & \cat{Set}^X\rlap{ .}
  }
\end{equation*}
We shall show that $\cat E$ is isomorphic to the category of opens of a bounded
ionad structure on the set $Z$; it is then easy to see that this ionad must be
the coequaliser of $f$ and $g$ in $\cat{BIon}$. The key step in the proof will
be to show that $\cat E$ is an accessible category, which we will do using the
fact that the $2$-category $\cat{ACC}$ of accessible categories is closed under
the formation of inserters and equifiers in $\cat{CAT}$ (see ~\cite[Theorem
5.1.6]{Makkai1989Accessible}). So let $\gamma \colon f^{-1} . J \Rightarrow I .
f^{-1}$ and $\delta \colon g^{-1} . J \Rightarrow I . g^{-1}$ be the natural
transformations corresponding to the functors $f^\ast$ and $g^\ast$. Now an an
object of $\cat E$ consists of a coalgebra $a \colon A \to JA$ in $\cat{Set}^Y$
such that the equality
\begin{equation*}
  f^{-1} A \xrightarrow{f^{-1} a} f^{-1} JA \xrightarrow{\gamma_A} I f^{-1} A
\quad = \quad
  g^{-1} A \xrightarrow{g^{-1} a} g^{-1} JA \xrightarrow{\delta_A} I g^{-1} A
\end{equation*}
holds. In particular, this means that $f^{-1} A = g^{-1} A$, so that to give
such an object is equally well to give an object $W \in \cat{Set}^Z$ and a
coalgebra $a \colon q^{-1} W \to J q^{-1} W$ such that $\gamma_X . f^{-1}
q^{-1} u = \delta_X . g^{-1} q^{-1} u$. Using this explicit description of
$\cat E$, it is easy to give a construction of it from inserters and equifiers
in $\cat{ACC}$: first we form the inserter of the two functors $q^{-1}, Jq^{-1}
\colon \cat{Set}^Z \rightrightarrows \cat{Set}^Y$, and then equify three pairs
of $2$-cells, imposing the coalgebra axioms and the additional compatibility
with $\gamma_X$ and $\delta_X$. It follows that $\cat E$ is accessible as
claimed.

We may now show by a straightforward diagram chase that, because $U_Y$ and
$U_X$ strictly create colimits and finite limits, so too does $V$. Since
$\cat{Set}^Z$ has all colimits, it follows that $\cat E$ does too, and that $V$
preserves them. Hence $\cat E$ is locally presentable, and by the special
adjoint functor theorem, $V$ has a right adjoint $G$. Moreover, since $V$
creates finite limits, it in particular preserves them, so that the composite
$VG$ is the interior comonad of an ionad on $Z$. Since $V$ strictly creates
equalisers, it is strictly comonadic, so that $\cat E$ is isomorphic to the
category of opens of this ionad; and it remains only to show the ionad to be
bounded. But since $\cat E$ is locally presentable, it is in particular a
Grothendieck topos, so we are done by Proposition~\ref{boundprop}.
\end{proof}

We have similarly good behaviour with respect to limits:

\begin{Thm}\label{bioncomplete}
$\cat{BIon}$ is complete as a $2$-category.
\end{Thm}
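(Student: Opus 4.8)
The plan is to reduce completeness to three classes of weighted limit and to build each by the method used for coequalisers in the proof of Theorem~\ref{bioncocomp}. A $2$-category is complete as soon as it has small products, cotensors (powers) by small categories, and equalisers, since these generate every weighted limit; so it suffices to construct these three in $\cat{BIon}$. The point set underlying each is forced in advance: the functor sending an ionad to its set of points has a left adjoint on underlying ordinary categories---the discrete ionad $S \mapsto (S, \id)$, for which $\cat{BIon}((S,\id),Y)$ has object-set $\cat{Set}(S, UY)$---and so preserves ordinary limits. Hence the product of $(X_k)_{k}$ lies over $\prod_k X_k$, the cotensor $\cat C \pitchfork X$ over $(UX)^{\pi_0\cat C}$, and the equaliser of $f,g \colon X \rightrightarrows Y$ over the subset $\{x \in X \mid fx = gx\}$.

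For each case I would follow the same template. Fix the point set $Z$; construct the putative category of opens $\cat E$ from the toposes $\cat O(X_k)$ together with a forgetful functor $V \colon \cat E \to \cat{Set}^Z$; prove $\cat E$ accessible by exhibiting it through products, inserters and equifiers and invoking the closure of $\cat{ACC}$ under these~\cite[Theorem~5.1.6]{Makkai1989Accessible}; check by a diagram chase, as before, that $V$ strictly creates colimits and finite limits; conclude from the special adjoint functor theorem that $V$ has a right adjoint $G$, that $VG$ is a cartesian comonad, and---$V$ being strictly comonadic---that $\cat E \cong \cat O(Z, VG)$. Boundedness is then immediate from accessibility by Proposition~\ref{boundprop}, and it remains to verify the relevant $2$-dimensional universal property, for which the descriptions of continuous maps and specialisations out of a basis in Examples~\ref{basisex} and~\ref{basisex2} are the natural tool.

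Two of the three cases are comparatively direct and can even bypass the accessibility argument. Writing each bounded $X_k$ by a basis $M_k \colon \cat B_k \to \cat{Set}^{X_k}$, the product is presented by the ``box'' basis whose objects are finite families $(B_k)_{k \in S}$ and which sends such a family to $(x \mapsto \prod_{k\in S}(M_k B_k)(x_k))$; this is flat because a finite product of cofiltered categories is cofiltered. The cotensor by a connected $\cat C$ lies over $UX$ and is generated by a corresponding cotensor of a basis for $X$, and the general cotensor then splits as a product over the components of $\cat C$. In both cases the universal property reduces, through Example~\ref{basisex2}, to the observation that a coalgebra structure for the combined basis is exactly a compatible family of coalgebra structures on the factors.

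The hard part will be the equaliser. Its point set is the naive subset $Z = \{x \mid fx = gx\}$, but the equaliser must do more than restrict the points: a map $h \colon W \to X$ factors through it precisely when $fh = gh$ as continuous maps, which forces not only $Uh$ to land in $Z$ but also the two liftings to agree, $e^\ast f^\ast = e^\ast g^\ast$. Thus $\cat O(E)$ is not the subspace ionad on $Z$ but a genuine localisation of it, and the delicate step---exactly dual to the coequaliser construction, where the opens of the quotient arose as the \emph{equaliser} of $f^\ast$ and $g^\ast$---is to realise this category of opens as an accessible, hence Grothendieck, topos. I would describe $\cat E$ concretely, as in the coequaliser argument, as an accessible category of coalgebra data over $\cat{Set}^Z$ subject to the condition that the comonad morphisms corresponding to $f^\ast$ and $g^\ast$ agree, and build it from inserters and equifiers in $\cat{ACC}$; the verifications that $V$ creates colimits and finite limits, and that the resulting ionad has the $2$-equaliser property, then proceed exactly as there.
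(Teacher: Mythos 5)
The reduction of $2$-completeness to products, equalisers and cotensors is a legitimate decomposition, but your method for identifying the underlying point sets breaks down exactly where it matters. The adjunction $S \mapsto (S,\id)$, $\dashv$, $U$ is only a $1$-adjunction: with $\cat{Set}$ locally discrete it cannot be a $2$-adjunction, because $\cat{BIon}((S,\id),Y)$ has non-identity morphisms --- indeed $\cat{BIon}(1,X)$ is by definition the specialisation category $VX$, which is not discrete in general (for an Alexandroff ionad $A(\cat C)$ it is $\cat C$ itself). Hence $U$ preserves conical limits but says nothing about cotensors, and your claim that $\cat C \pitchfork X$ lies over $(UX)^{\pi_0 \cat C}$ is false. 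Applying the required universal property $\cat{BIon}(1, \cat C \pitchfork X) \simeq \cat{BIon}(1,X)^{\cat C} = (VX)^{\cat C}$ shows that the points of $\cat C \pitchfork X$ must be the functors $\cat C \to VX$; for the connected category $\cat 2$ these are the \emph{morphisms} of $VX$, i.e.\ triples $(x,y,\alpha)$ with $\alpha \colon M(\thg)(x) \Rightarrow M(\thg)(y)$, not the elements of $UX$. This is precisely the point set the paper uses, with the basis $N \colon \cat B^{\cat 2} \to \cat{Set}^{\mathrm{mor}\, VX}$ defined by the pullback~\eqref{Ndef}; an ionad over $UX$ cannot satisfy the cotensor property.

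The equaliser plan has a second, independent gap. Equality $fh = gh$ of continuous maps forces the \emph{liftings} to agree, so, as you note, the category of opens of the equaliser must co-universally identify $f^\ast$ and $g^\ast$; but this is a \emph{colimit}-type construction on categories (dual to the equaliser of $f^\ast, g^\ast$ used for coequalisers of ionads), and the Makkai--Par\'e theorem only gives closure of $\cat{ACC}$ under inserters and equifiers, i.e.\ under certain \emph{limits} in $\cat{CAT}$. So the ``exactly dual'' argument cannot be run through~\cite[Theorem 5.1.6]{Makkai1989Accessible}, and you are left needing colimits of accessible cartesian comonads with no tool to produce them. This is exactly what the paper's proof supplies and yours omits: it establishes $1$-completeness by showing $U \colon \cat{BIon} \to \cat{Set}$ is a fibration (cartesian lifts built from the adjunction string through $\Pi_f$ and $\cat O(Y)$), that each fibre --- the opposite of the category of accessible cartesian comonads on $\cat{Set}^X$ --- is complete, which requires the genuinely nontrivial step of proving $\cat{AC}(\cat{Set}^X,\cat{Set})$ reflective in $\cat{Acc}(\cat{Set}^X,\cat{Set})$ and hence cocomplete, and that reindexing preserves limits; the $2$-dimensional part is then reduced to the cotensor with $\cat 2$ described above. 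Without the reflectivity argument (or some substitute producing colimits of comonads), neither your equalisers nor, in fact, your infinite products rest on solid ground.
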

\begin{proof}[Proof (sketch)]
We begin by showing that $\cat{BIon}$ is complete as a $1$-category. First we
prove that the forgetful functor $U \colon \cat{BIon} \to \cat{Set}$ is a
fibration; then we show that all the fibres of $U$ are complete; and then we
show that reindexing between those fibres preserves limits. These three
conditions together imply that $\cat{BIon}$, the total category of this
fibration, is complete as a $1$-category.

To show that $U \colon \cat{BIon} \to \cat{Set}$ is a fibration, we must, given
a bounded ionad $(Y, J)$ and a map of sets $f \colon X \to Y$, produce a
cartesian lift $(f, f^\ast) \colon (X, I) \to (Y, J)$ in $\cat{BIon}$. We take
$I \colon \cat{Set}^X \to \cat{Set}^X$ to be the comonad generated by the
string of adjunctions
\begin{equation*}
    \cd{\cat{Set}^{X} \ar@<-4pt>[r]_{\Pi_f} \ar@{}[r]|{\bot} & \cat{Set}^{Y} \ar@<-4pt>[r]_{\text{cofree}} \ar@{}[r]|{\bot} \ar@<-4pt>[l]_{f^{-1}} & {\cat{O}(Y)} \ar@<-4pt>[l]_{\text{forget}}}\ \text,
\end{equation*}
and take $(f, f^\ast) \colon (X, I) \to (Y, J)$ to be the map corresponding,
under Remark~\ref{alternatemap}, to the natural transformation $f^{-1} . J .
\eta \colon f^{-1} . J \Rightarrow f^{-1} . J . \Pi_f . f^{-1} = I . f^{-1}$.
It is easy to check that this map is cartesian; and so $U$ is a fibration.

Secondly, we show that each of the fibres of $U$ is complete. For a given set
$X$, the fibre category $U_X$ is the opposite of the category of accessible,
cartesian comonads on $\cat{Set}^X$, and to show this complete, it suffices to
show that the category $\cat{AC}(\cat{Set}^X, \cat{Set}^X)$ of accessible,
cartesian endofunctors of $\cat{Set}^X$ is cocomplete. But this category is
isomorphic to $\cat{AC}(\cat{Set}^X, \cat{Set})^X$, so it's enough to show that
$\cat{AC}(\cat{Set}^X, \cat{Set})$ is cocomplete; which we do by proving it
reflective in the cocomplete $\cat{Acc}(\cat{Set}^X, \cat{Set})$. So given a
functor $A \colon \cat{Set}^X \to \cat{Set}$ which preserves $\kappa$-filtered
colimits, we let $\cat C$ denote a skeleton of the full subcategory of
$\cat{Set}^X$ on the $\kappa$-presentable objects; by elementary cardinal
arithmetic, $\cat C$ has finite limits and the inclusion $V \colon \cat C \to
\cat{Set}^X$ preserves them. The category $\cat{Cart}(\cat C, \cat{Set})$ is
reflective in $[\cat C, \cat{Set}]$; let $B$ denote the reflection of $A \circ
V$ into it. Now $\mathrm{Lan}_V B$ is clearly accessible, but we claim it is
also cartesian: whereupon it easily provides the required reflection of $A$
into $\cat{AC}(\cat{Set}^X, \cat{Set})$. To prove the claim, note that, since
$V$ is dense, $\Lan_V B$ is the composite of $B \otimes (\thg) \colon
\cat{Set}^{\cat C^\op} \to \cat{Set}$ with $[V, \thg] \colon \cat{Set}^X \to
\cat{Set}^{\cat C^\op}$. The former is cartesian because $B$ is, and the latter
because it is a right adjoint; so $\Lan_V B$ is cartesian as desired.

Thirdly, we show that for every map of sets $f \colon X \to Y$, the reindexing
functor $U_f \colon U_Y \to U_X$ preserves limits. This is equivalent to
showing that $U_f^\op$ is cocontinuous, for which it's enough to show that
\[ f^{-1} . (\thg) . \Pi_f \colon \cat{AC}(\cat{Set}^Y, \cat{Set}^Y) \to
   \cat{AC}(\cat{Set}^X, \cat{Set}^X)
\] is cocontinuous; but this is immediate from the fact that it has a right
adjoint $\Pi_f . (\thg) . f^{-1}$. This completes the proof that $\cat{BIon}$
is complete as a $1$-category.

To show that $\cat{BIon}$ is complete as a $2$-category, it now suffices to
show that it admits cotensors products with the arrow category $\mathbf 2$. If
$X$ is the ionad generated by a basis $M \colon \cat B \to \cat{Set}^X$, then
the cotensor product $\cat 2 \pitchfork X$ will be the ionad whose set of
points $Z$ is the set of triples $(x,y,\alpha)$, where $x, y \in X$ and $\alpha
\colon M(\thg)(x) \Rightarrow M(\thg)(y)$; observe that $Z$ is the set of
morphisms of the category $VX$ of Definition~\ref{specfunctor}. The topology on
this ionad is generated by the following basis $N \colon \cat B^{\cat 2} \to
\cat{Set}^{Z}$. For an object $k \colon c \to d$ of $\cat B ^{\cat 2}$ and
element $\alpha \colon M(\thg)(x) \Rightarrow M(\thg)(y)$ of $Z$, the set
$N(k)(\alpha)$ is obtained as the pullback
\begin{equation}\label{Ndef}
    \cd[@C+1em]{
      N(k)(\alpha) \ar[r] \ar[d] \pushoutcorner &
      M(d)(x) \ar[d]^{\alpha_{d}} \\
      M(c)(y) \ar[r]_{M(k)(y)} &
      M(d)(y)\rlap{ .}}
\end{equation}
With some effort, we may check that $N$ is flat; and with considerable further
effort, may verify the ionad it induces does indeed possess the universal
property required of the cotensor $\cat 2 \pitchfork X$. The argument is
similar to, but more elaborate than, the one given in the following Remark in
regard of finite products; it is closely related to the corresponding
topos-theoretic argument, as given in~\cite[Proposition
B4.1.2]{Johnstone2002Sketches}, for example.
\end{proof}

\begin{Rk}\label{finiteprod}
We may describe the product of two bounded ionads more concretely, in terms of
a basis generated by open rectangles. More precisely, if the ionads $X$ and $Y$
are generated by bases $M \colon \cat B \to \cat{Set}^X$ and $N \colon \cat C
\to \cat{Set}^Y$, then their product is the ionad with set of points $X \times
Y$ and topology generated by the basis
\begin{align*}
M \otimes N \colon \cat B \times \cat C
& \to \cat{Set}^{X \times Y}\\
    (M \otimes N)(b,c)(x,y) & = M(b)(x) \times N(c)(y)\ \text.
\end{align*}
Flatness of $M \otimes N$ follows easily from that of $M$ and $N$. To show that
the ionad it generates is a product of $X$ and $Y$, we must show that, for any
pair of functions $f \colon Z \to X$, $g \colon Z \to Y$, we have a bijection
between squares of the form
\begin{equation}\label{hdata}
    \cd[@C+1em]{
    \cat B \times \cat C \ar[r]^{h'} \ar[d]_{M \otimes N} &
    \cat O(Z) \ar[d]^{U_Z} \\
    \cat{Set}^{X \times Y} \ar[r]_-{(f,g)^{-1}} &
    \cat{Set}^Z
}
\end{equation}
and pairs of squares of the form
\begin{equation}\label{fgdata}
    \cd{
    \cat B \ar[r]^{f'} \ar[d]_{M} &
    \cat O(Z) \ar[d]^{U_Z} \\
    \cat{Set}^{X} \ar[r]_{f^{-1}} &
    \cat{Set}^Z
}\qquad \text{and} \qquad
    \cd{
    \cat C \ar[r]^{g'} \ar[d]_{N} &
    \cat O(Z) \ar[d]^{U_Z} \\
    \cat{Set}^{Y} \ar[r]_{g^{-1}} &
    \cat{Set}^Z\rlap{ .}
}
\end{equation}
On the one hand, if given $f'$ and $g'$ as in~\eqref{fgdata}, then we define
the corresponding $h'$  by $h'(b, c) = f'(b) \times g'(c)$; since $U_Z$
strictly creates finite limits, we may always choose this product in such a way
as to make~\eqref{hdata} commute. Conversely, if given $h'$ as
in~\eqref{hdata}, we define the corresponding $f'$ and $g'$ by $f'(b) = \int^{c
\in \mathbf C} h'(b,c)$ and $g'(c) = \int^{b \in \mathbf B} h'(b,c)$; again,
since $U_Z$ strictly creates colimits, we may choose the colimits in question
so as to render the squares in~\eqref{fgdata} commutative.
\end{Rk}
\begin{Rk}\label{alexspecionad}
We may also describe the tensor product of a bounded ionad by a small category
in terms of bases: given a basis $M \colon \cat B \to \cat{Set}^X$ for an ionad
$X$, easy calculation shows that $\cat C \otimes X$ may be generated by the
basis
\begin{align*}
N \colon \cat C^\op \times \cat B
& \to \cat{Set}^{\ob \cat C \times X}\\
    N(c,b)(c',x) & = \cat C(c',c) \times M(b)(x)\ \text.
\end{align*}
Now by comparing this description with Examples~\ref{ordgen}.1 and
Remark~\ref{finiteprod}, we conclude that for a bounded ionad $X$, the tensor
product $\cat C \otimes X$ is equally well the product $A(\cat C) \times X$; in
particular, there is a $2$-natural isomorphism $A \cong (\thg) \otimes 1 \colon
\cat{Cat} \to \cat{BIon}$, and so by virtue of the $2$-adjunction
\begin{equation*}
    (\thg) \otimes 1 \dashv \cat{BIon}(1, \thg) \colon \cat{BIon} \to \cat{Cat}
\end{equation*}
we deduce, as promised in Remark~\ref{alexspec}, that the Alexandroff embedding
$A \colon \cat{Cat} \to \cat{BIon}$ is left $2$-adjoint to the specialisation
$2$-functor $V \colon \cat{BIon} \to \cat{Cat}$.
\end{Rk}

\section{Conclusions}\label{s6}
In this final section, we make a few comments on the advantages and
disadvantages of the notion of ionad as compared with the notion of topos. The
obvious starting point for such a discussion is a consideration of the
analogous relationship between the notions of topological space and locale.

One of the major advantages that locales have over spaces is the ease with
which their theory may be \emph{relativised}. Maps of locales $X \to Y$ may be
identified with internal locales in the sheaf topos $\cat{Sh}(Y)$, and so
properties of, and constructions on, locales---so long as these are expressed
in the logic common to any topos---may without effort be transferred to
properties of, and constructions on, maps of locales. For instance, as soon as
we know how to form the product of locales, we also know how to form the fibre
product over $X$: it is simply the product of locales internal to
$\cat{Sh}(X)$. The theory of topological spaces does not relativise in the same
way, since many parts of its development makes essential use of classical
logic, and so do not internalise well to an arbitrary topos.

It seems likely that this advantage of locales over spaces propagates upwards
to a corresponding advantage of toposes over ionads. Certainly, the theory of
toposes relativises very satisfactorily: for example, bounded geometric
morphisms into a topos $\F$ correspond with internal sites in that topos; and
this means that, for example, constructing the pullback of bounded geometric
morphisms is scarely more problematic than constructing the product of two
Grothendieck toposes. Yet it seems unlikely that the theory of ionads
relativises in the same manner: so, for instance, we should not expect our
concrete description of the product of two bounded ionads to yield a
corresponding concrete description of pullbacks of bounded ionads. This, then,
is one reason for preferring toposes over ionads.

A second reason is that many toposes of interest do not have a natural
expression as an ionad. Most obviously, this could be because the topos we are
interested in does not have enough points: which mirrors the corresponding fact
that a non-spatial locale will not admit a natural expression as a topological
space. More subtly, it could be that the topos we are interested in has
\emph{too many} points: namely, a proper class of them. Such a topos, if
spatial, will admit any number of different representations as an ionad, but
each such representation will require the selection of a mere \emph{set} of
separating points: and since maps of ionads are required to preserve these
selected sets of points, none of the ionads representing the topos will be able
to capture the full range of geometric morphisms into it. This means, amongst
other things, that the theory of classifying toposes has no ionad-theoretic
analogue. For instance, there can be no bounded ionad~$Y$ which ``classifies
groups'' in the sense that ionad morphisms $X \to Y$ correspond with group
objects in $\cat O(X)$. The best we can do is to construct, as in
Examples~\ref{ordgen}.5, the ionad $Y$ which ``classifies $\lambda$-small
groups''---in the sense that ionad morphisms $X \to Y$ correspond with group
objects $G \in \cat O(X)$ whose ``stalks are $\lambda$-small''; in other words,
such that $(U_X G)(x)$ is a set of cardinality $< \lambda$ for each $x \in X$.
Clearly this is nowhere near as useful a notion, which is something of a pity:
the classifying topos of groups really should be considered as ``the
generalised space of all groups equipped with the Scott topology'', and the
language of ionads would appear ideal for the expression of this idea. It is
conceivable that this problem could be overcome with a sufficiently clever
definition of ``large ionad''---one endowed with a proper class of points---but
whilst there are a few obvious candidates for such a notion, none seems to be
wholly satisfactory.

These, then, are two quite general grounds for preferring toposes over ionads;
yet there remain good reasons for having the notion of ionad available to us.
The first is that some particular applications of topos theory may be more
perspicuously expressed in the language of ionads than of toposes: two examples
that come to mind are the sheaf-theoretic semantics for first-order modal logic
given in~\cite{AK}, and the generalised Stone duality of~\cite{F}. The second
reason is pedagogical. Many aspects of topos theory are abstractions of
corresponding aspects of general topology, but the abstraction is twice
removed: first one must pass from spaces to locales, and then from locales to
toposes. At the first step, one loses the points, which to many, is already to
enter a quite unfamiliar world, and the second step can only compound this
unfamiliarity. With the notion of ionad available, one may arrive at these same
abstractions by a different route, passing first from spaces to ionads, and
then from ionads to toposes. The advantage of doing so is that one retains the
tangibility afforded by the presence of points for as long as possible. It
seems to me that it is in this pedagogical aspect that ionads are likely to
make their most useful contribution.



\end{document}